\documentclass[smallcondensed]{svjour3}       % onecolumn (second format)
\smartqed
\usepackage[dvips]{graphicx}
\usepackage[numbers]{natbib}
\usepackage{amsmath}
\usepackage{amssymb}
%\usepackage{amsthm}
%\usepackage{typearea}
%\typearea{10}
\usepackage{algorithm}
\usepackage{algorithmic}
\usepackage{bm}
\usepackage{color}
\usepackage{url}
\usepackage[pdftex,pdfstartview=FitH]{hyperref}
\usepackage{multirow}
\usepackage{enumerate}
\usepackage{pgf}
\usepackage{subcaption}

\newcommand{\qedwhite}{\hfill \ensuremath{\Box}}

\newcommand{\vect}{{\mbox{$\rm vec$}}}
\newcommand{\rank}{{\mbox{$\rm rank$}}}
\newcommand{\prox}{{\mbox{\boldmath $\rm prox$}}}
\newcommand{\proj}{{\mbox{\boldmath $\rm proj$}}}

\newcommand{\bvec}[1]{\mbox{\boldmath $#1$}}

\newcommand{\argmin}{\operatornamewithlimits{argmin}}
\newcommand{\Argmin}{\operatornamewithlimits{Argmin}}

\newcommand{\diag}{\mathrm{diag}}
\newcommand{\Diag}{\mathrm{Diag}}

\newtheorem{THEO}{Theorem}

\newtheorem{ALGo}{Algorithm}
\newtheorem{CONJ}{Conjecture}
\newtheorem{COND}{Condition}
\newtheorem{CORO}{Corollary}
\newtheorem{DEFI}{Definition}
\newtheorem{FACT}[THEO]{Fact}
\newtheorem{HYPO}[THEO]{Hypothesis}
\newtheorem{LEMM}{Lemma}
\newtheorem{PROB}{Problem}
\newtheorem{PROP}{Proposition}
\newtheorem{REMA}{Remark}
\newtheorem{EXAMP}{Example}

\newcommand{\theo}{\begin{THEO}}
\newcommand{\algo}{\begin{ALGo} \rm}
\newcommand{\cond}{\begin{COND} \rm}
\newcommand{\conj}{\begin{CONJ}}
\newcommand{\coro}{\begin{CORO}}
\newcommand{\defi}{\begin{DEFI} \rm}
\newcommand{\examp}{\begin{EXAMP} \rm}
\newcommand{\fact}{\begin{FACT}}
\newcommand{\hypo}{\begin{HYPO} \rm}
\newcommand{\lemm}{\begin{LEMM}}
\newcommand{\prob}{\begin{PROB} \rm}
\newcommand{\prop}{\begin{PROP}}
\newcommand{\rema}{\begin{REMA} \rm}
\newcommand{\etheo}{\end{THEO}}
\newcommand{\ealgo}{\end{ALGo}}
\newcommand{\econd}{\end{COND}}
\newcommand{\econj}{\end{CONJ}}
\newcommand{\ecoro}{\end{CORO}}
\newcommand{\edefi}{\end{DEFI}}
\newcommand{\eexamp}{\end{EXAMP} }
\newcommand{\efact}{\end{FACT}}
\newcommand{\ehypo}{\end{HYPO}}
\newcommand{\elemm}{\end{LEMM}}
\newcommand{\eprob}{\end{PROB}}
\newcommand{\eprop}{\end{PROP}}
\newcommand{\erema}{\end{REMA}}
\def\0{\mbox{\bf 0}}
\def\1{\mbox{\bf 1}}
\def\2{\mbox{\bf 2}}
\def\3{\mbox{\bf 3}}
\def\4{\mbox{\bf 4}}
\def\5{\mbox{\bf 5}}
\def\6{\mbox{\bf 6}}
\def\7{\mbox{\bf 7}}
\def\8{\mbox{\bf 8}}
\def\9{\mbox{\bf 9}}

\def\e{{\bm e}}

\def\A{{\bm A}}

\def\SC{\mathcal{S}}

\def\A{\mathcal{A}}

\def\R{\mathbb{R}}

\begin{document}
\title{A successive difference-of-convex approximation method for a class of nonconvex nonsmooth optimization problems
\thanks{Ting Kei Pong is supported in part by Hong Kong Research Grants Council PolyU153085/16p. Akiko Takeda is supported by Grant-in-Aid for Scientific Research (C), 15K00031.}
}
\titlerunning{Successive DC Approximation for Nonconvex and Nonsmooth Problems}
\author{Tianxiang Liu \and Ting Kei Pong \and Akiko Takeda}
\institute{
           T. Liu \at
           Department of Applied Mathematics, The Hong Kong Polytechnic University, Hong Kong. \\
           \email{tiskyliu@polyu.edu.hk}
           \and
           T. K. Pong \at
           Department of Applied Mathematics, The Hong Kong Polytechnic University, Hong Kong.\\
           \email{tk.pong@polyu.edu.hk}
           \and
           A. Takeda \at
           Department of Creative Informatics, Graduate School of Information Science and Technology, the University of Tokyo, Tokyo, Japan.\\
           \email{takeda@mist.i.u-tokyo.ac.jp}
           \at RIKEN Center for Advanced Intelligence Project, 1-4-1, Nihonbashi, Chuo-ku, Tokyo 103-0027, Japan.\\
           \email{akiko.takeda@riken.jp}
}
\authorrunning{T. Liu, T. K. Pong and A. Takeda}
\date{Received: date / Accepted: date}

\maketitle

\begin{abstract}
We consider a class of nonconvex nonsmooth optimization problems whose objective is the sum of a smooth function and a finite number of nonnegative proper closed possibly nonsmooth functions (whose proximal mappings are easy to compute), some of which are further composed with linear maps. This kind of problems arises naturally in various applications when different regularizers are introduced for inducing simultaneous structures in the solutions. Solving these problems, however, can be challenging because of the coupled nonsmooth functions: the corresponding proximal mapping can be hard to compute so that standard first-order methods such as the proximal gradient algorithm cannot be applied efficiently.
In this paper, we propose a successive difference-of-convex approximation method for solving this kind of problems. In this algorithm, we approximate the nonsmooth functions by their Moreau envelopes in each iteration. Making use of the simple observation that Moreau envelopes of nonnegative proper closed functions are continuous {\em difference-of-convex} functions, we can then approximately minimize the approximation function by first-order methods with suitable majorization techniques. These first-order methods can be implemented efficiently thanks to the fact that the proximal mapping of {\em each} nonsmooth function is easy to compute. Under suitable assumptions, we prove that the sequence generated by our method is bounded and any accumulation point is a stationary point of the objective. We also discuss how our method can be applied to concrete applications such as nonconvex fused regularized optimization problems and simultaneously structured matrix optimization problems, and illustrate the performance numerically for these two specific applications.
\keywords{Moreau envelope \and difference-of-convex approximation \and proximal mapping \and simultaneous structures}
\end{abstract}

\section{Introduction}\label{intro}
In this paper, we consider the following possibly nonconvex nonsmooth optimization problem:
\begin{equation}
\begin{array}{ll}
\underset{\bm{x}}{\mbox{minimize}} & \displaystyle{F(\bm x) := f(\bm{x}) + P_0(\bm x) + \sum_{i=1}^m P_i(\A_i\bm x),}
\end{array}
\label{sparseprob}
\end{equation}
with the objective satisfying the following assumptions (see the next section for notation and definitions):
\begin{enumerate}[\bf { A}1.]
  \item $f:\R^{n}\rightarrow\R$ is an $L$-smooth function i.e., there exists a constant $L>0$ so that
\begin{align*}
  \|\nabla f(\bm{x})-\nabla f(\bm{v})\|\le L\|\bm{x} -\bm{v}\|
\end{align*}
for any $\bm{x}$, $\bm{v}\in\R^{n}$.
  \item $\A_i:\R^n\rightarrow\R^{n_i}$, $i=1,\ldots,m$, are linear mappings and $P_i:\R^{n_i}\rightarrow\R_+\cup\{\infty\}$, $i=0,\ldots,m$, are proper closed functions. The functions $P_i$, $i=0,\ldots,m$, are continuous in their respective domains, and
  \[
 {\rm dom}\,P_0\cap \bigcap_{i=1}^m\A_i^{-1}({\rm dom}\,P_i) \neq \emptyset.
  \]
  Moreover, the proximal mapping of $\gamma P_i$ is easy to compute for every $\gamma > 0$ and for each $i=0,\ldots,m$. The sets ${\rm dom}\,P_i$, $i=1,\ldots,m$, are closed.
  \item The function $f + P_0$ is level-bounded, i.e., for each $r\in \R$, the set $\{\bm x\in\R^n: f(\bm x) + P_0(\bm x) \le r\}$ is bounded.
\end{enumerate}

Problem~\eqref{sparseprob} arises in many contemporary applications such as structured low rank matrix recovery problems (see, for example, \cite{Markovsky2008}), nonconvex fused regularized optimization problems (see, for example, \cite{ParakhSelesnick15} and Example~\ref{examp:fusedreg} in Section~\ref{sec:struct}) and simultaneously structured matrix optimization problems (see, for example, \cite{richard2012} and Example~\ref{examp:lr_sparse} in Section~\ref{sec:struct}). In these applications, the $P_i$'s are used for inducing desirable structures in the solutions and they are typically functions whose proximal mappings are easy to compute. If only one such function appears in \eqref{sparseprob}, i.e., $m = 0$, then some standard first-order methods such as the proximal gradient algorithm or its variants can be applied to solving \eqref{sparseprob} efficiently, because these algorithms only require the computation of $\nabla f$ and the proximal mapping of $\gamma P_0$ ($\gamma > 0$) in each iteration. However, in all the aforementioned applications, there are always more than one such structure-inducing functions in \eqref{sparseprob} (i.e., $m\ge 1$) and the ${\cal A}_i$'s might not always be identity mappings. Then the proximal gradient algorithm and its variants cannot be applied efficiently, because the proximal mapping of $\bm x\mapsto P_0(\bm x) + \sum_{i=1}^m P_i(\A_i\bm x)$ can be hard to compute in general.

When the function $f$ and the $P_i$'s are all convex functions, one alternative approach for solving \eqref{sparseprob} is the alternating direction method of multipliers (ADMM); see, for example, \cite{EckBer92,GabMer76}. This method can be applied to \eqref{sparseprob} by suitably introducing slack variables that transform the problem into a linearly constrained problem, and each iteration only requires computing the proximal mappings of $f$ and $\gamma P_i$'s, as well as an update of an auxiliary (dual) variable. However, it is known that the ADMM does not necessarily converge if the $P_i$'s are nonconvex and $m\ge 1$; see, for example, \cite[Example~7]{LiPong15}.
In the case when $P_i$'s are nonconvex but globally Lipschitz for $i=0,\ldots,m$ , and ${\cal A}_i$ is the identity mapping for all $i$, a new method for solving \eqref{sparseprob} was introduced in a series of work \cite{Yu2013,Yu2015}. Their method is based on the so-called proximal average of $P_i$'s, and each iteration involves only the computations of $\nabla f$ and the proximal mappings of $\gamma P_i$'s. However, it was only shown that any accumulation point of the sequence generated by their method is a stationary point of a certain smooth approximation of \eqref{sparseprob}. Moreover, their method was designed for the case when $P_i$'s are globally Lipschitz, and the convergence behavior of their method is unknown when some non-Lipschitz functions such as the $\ell_p$ quasi-norm or the indicator function of some closed sets (such as the set of all $k$-sparse vectors) are present in \eqref{sparseprob}.

In this paper, we propose a new method for solving \eqref{sparseprob} that is ready to take advantage of the ease of proximal mapping computations and has convergence guarantee under suitable assumptions, without imposing convexity nor globally Lipschitz continuity on $P_i$'s. We call our method the successive difference-of-convex approximation method (SDCAM). In this method, we construct an approximation to the objective of \eqref{sparseprob} in each iteration using the Moreau envelopes of the $\lambda_{i,t}P_i$, $i=1,\ldots,m$, where $t$ is the number of iteration and $\{\lambda_{i,t}\}$ are nonincreasing positive sequences satisfying $\lim_{t\to\infty}\lambda_{i,t} = 0$; a suitable approximate stationary point of this approximation function is then taken to be the next iterate $\bm x^{t+1}$ of our algorithm. The point $\bm x^{t+1}$ can be found efficiently by recalling that the Moreau envelopes involved, despite being nonsmooth in general due to the possible nonconvexity of the $P_i$'s, are continuous difference-of-convex functions. Thus, one can incorporate majorization techniques in some standard first-order methods such as the proximal gradient algorithm for finding $\bm x^{t+1}$ in each iteration. Moreover, when such first-order methods are applied, the main computational cost per inner iteration typically only depends on the computations of $\nabla f$ and the proximal mappings of $\gamma P_i$, $i=0,\ldots,m$, $\gamma > 0$, which are inexpensive in many applications. This suggests that the SDCAM can be applied efficiently for solving \eqref{sparseprob}. More details of this algorithm will be discussed in Section~\ref{sec3}, where we also prove that the sequence $\{\bm x^t\}$ generated is bounded and any accumulation point is a stationary point of \eqref{sparseprob} under suitable assumptions.

The rest of the paper is organized as follows. In Section~\ref{sec2}, we introduce notation and some preliminary results. Our SDCAM is presented and its convergence is analyzed under suitable assumptions in Section~\ref{sec3}. We then discuss how our method can be applied to various kinds of structured optimization problems including some nonconvex fused regularized optimization problems, some simultaneously sparse and low rank matrix optimization problems, and the low rank nearest correlation matrix problem, in Section~\ref{sec:struct}. We also perform numerical experiments on some of these applications to demonstrate the efficiency of our algorithm in Section~\ref{sec5}. Finally, we present some concluding remarks in Section~\ref{sec6}.

\section{Notation and preliminaries}\label{sec2}

In this paper, vectors and matrices are represented in bold lower case letters and upper case letters, respectively.
The inner product of two vectors $\bm{a}$ and $\bm{b}\in\mathbb{R}^{n}$ are denoted by $\bm{a}^\top\bm{b}$ or $\bm{b}^\top\bm{a}$, and we use $\|\bm a\|_0$, $\|\bm a\|_1$ and $\|\bm a\|$ to denote the number of nonzero entries, the $\ell_1$ norm and the $\ell_2$ norm of $\bm a$, respectively. Moreover, we use $\Diag(\bm a)$ to denote the diagonal matrix whose diagonal is $\bm a$.
For two matrices $\bm{A}$ and $\bm{B}\in\R^{m{\times}n}$, their Hadamard (entrywise) product is denoted by $\bm A\circ\bm B$. We also use $\|\bm A\|_*$ and $\|\bm A\|_F$ to denote the nuclear norm and the Fr\"{o}benius norm of $\bm A$, respectively, and let $\vect(\bm A)\in \R^{mn{\times}1}$ denote the vectorization of $\bm A$, which is
obtained by stacking the columns of $\bm A$ on top of one another. Furthermore, we use $\sigma_{\max}(\bm A)$ to denote the largest singular value of $\bm A$. The space of symmetric $n\times n$ matrices is denoted by $\SC^n$. For a matrix $\bm X\in \SC^n$, we use $\diag(\bm X)\in \R^n$ to denote its diagonal and $\lambda_{\max}(\bm X)$ to denote its largest eigenvalue. We write $\bm X\succeq 0$ if $\bm X$ is positive semidefinite. For a linear operator $\A$, we let $\A^*$ denote its adjoint.

A function $h:\R^{n}\to \R\cup \{\infty\}$ is said to be proper if ${\rm dom}\,h:=\{\bm x:\; h(\bm x)<\infty\}\neq \emptyset$. Such a function is said to be closed if it is lower semicontinuous. Following \cite[Definition~8.3]{RockWets98}, for a proper function $h$, the limiting and horizon subdifferentials at $\bm x\in {\rm dom}\,h$ are defined respectively as
\[
\begin{split}
&\partial h(\bm x)= \left\{\bm u:\; \exists \bm u^t \to \bm u, \bm x^t \stackrel{h}{\to}\bm x \ \ \mbox{with}\ \bm u^t\in \hat\partial h(\bm x^t)\ \mbox{for each $t$}\right\},\\
&\partial^\infty h(\bm x)= \left\{\bm u:\; \exists \alpha_t\downarrow 0, \alpha_t\bm u^t \to \bm u, \bm x^t \stackrel{h}{\to}\bm x \ \ \mbox{with}\ \bm u^t\in \hat\partial h(\bm x^t)\ \mbox{for each $t$}\right\},
\end{split}
\]
where $\hat \partial h(\bm w) := \left\{\bm u:\; \liminf\limits_{\bm y\to\bm w,\bm y\neq \bm w}\frac{h(\bm y)-h(\bm w)-{\bm u}^\top(\bm y-\bm w)}{\|\bm y-\bm w\|}\ge 0\right\}$, and the notation $\bm x^t \stackrel{h}{\to}\bm x$ means $\bm x^t \to \bm x$ and $h(\bm x^t)\to h(\bm x)$. We also define $\partial h(\bm x) = \partial^\infty h(\bm x) := \emptyset$ when $\bm x\notin {\rm dom}\,h$. It is easy to show that at any $\bm x\in {\rm dom}\,h$, the limiting and horizon subdifferentials have the following robustness property:
\begin{equation}\label{robust}
\begin{split}
  \{\bm u:\; \exists \bm u^t \to \bm u, \bm x^t \stackrel{h}{\to}\bm x \ \mbox{with}\ \bm u^t \in \partial h(\bm x^t) \ \mbox{for each }t\}&\subseteq \partial h(\bm x),\\
  \{\bm u:\; \exists \alpha_t \downarrow 0, \alpha_t\bm u^t \to \bm u, \bm x^t \stackrel{h}{\to}\bm x \ \mbox{with}\ \bm u^t \in \partial h(\bm x^t) \ \mbox{for each }t\}&\subseteq \partial^\infty h(\bm x).
\end{split}
\end{equation}
The limiting subdifferential at $\bm x$ reduces to $\{\nabla h(\bm x)\}$ if $h$ is continuously differentiable at $\bm x$ \cite[Exercise~8.8(b)]{RockWets98}, and reduces to the convex subdifferential if $h$ is proper convex \cite[Proposition~8.12]{RockWets98}.

For a proper closed function $h$ with $\inf h > -\infty$, we will also need its Moreau envelope for any given $\lambda > 0$, which is defined as
\[
e_\lambda h(\bm x) := \inf_{\bm y}\left\{\frac1{2\lambda}\|\bm x - \bm y\|^2 + h(\bm y)\right\}.
\]
This function is finite everywhere \cite[Theorem~1.25]{RockWets98}.
It is not hard to see that
\begin{equation}\label{inf_ineq}
e_\lambda h(\bm x)\le h(\bm x)
\end{equation}
for all $\bm x$.
The infimum in the definition of Moreau envelope is attained at the so-called proximal mapping of $\lambda h$ at $\bm x$, which is defined as
\begin{align*}
\prox_{\lambda h}(\bm{x}):=\Argmin_{\bm{u}\in \R^{n}}\left\{ \frac{1}{2\lambda}\|\bm{x}-\bm{u}\|^2 + h(\bm{u})\right\}.
\end{align*}
This set is always nonempty because $h$ is proper closed and bounded below \cite[Theorem~1.25]{RockWets98}. Let $\bm \zeta_\lambda\in \prox_{\lambda h}(\bm{x})$. Then we have from \cite[Theorem~10.1]{RockWets98} and \cite[Exercise~8.8(c)]{RockWets98} that
\begin{equation}\label{key_inclusion}
\frac{1}{\lambda}(\bm x - \bm \zeta_\lambda) \in \partial h(\bm \zeta_\lambda).
\end{equation}
Furthermore, we have the following simple lemma, which should be well known. We provide a short proof for self-containedness.
\begin{LEMM}\label{prox_limit}
  Let $h$ be a proper closed function with $\inf h > -\infty$ and let $\bm x^*\in {\rm dom}\,h$. Suppose that $\bm x^t\to \bm x^*$, $\lambda_t \downarrow 0$ and pick any $\bm \zeta^t\in \prox_{\lambda_t h}(\bm x^t)$ for each $t$. Then it holds that $\bm \zeta^t\in {\rm dom}\,h$ for all $t$ and $\bm \zeta^t \to \bm x^*$.
\end{LEMM}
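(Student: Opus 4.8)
The plan is to exploit directly the optimality of $\bm\zeta^t$ as a minimizer in the definition of the Moreau envelope, together with the standing hypothesis $\inf h > -\infty$. No further structure of $h$ beyond what guarantees the proximal set is nonempty will be needed.

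First I would dispose of the membership claim. Since $\bm\zeta^t \in \prox_{\lambda_t h}(\bm x^t)$, it attains the infimum defining $e_{\lambda_t}h(\bm x^t)$, and this infimum is finite by \cite[Theorem~1.25]{RockWets98}. Consequently $h(\bm\zeta^t) < \infty$, which gives $\bm\zeta^t \in \mathrm{dom}\,h$ for every $t$ and settles the first assertion.

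For the convergence, the key idea is to compare the optimal value attained at $\bm\zeta^t$ against the value of the same objective at the admissible point $\bm x^* \in \mathrm{dom}\,h$, which yields
\[
\frac{1}{2\lambda_t}\|\bm x^t - \bm\zeta^t\|^2 + h(\bm\zeta^t) \le \frac{1}{2\lambda_t}\|\bm x^t - \bm x^*\|^2 + h(\bm x^*).
\]
Writing $M := \inf h > -\infty$ and using $h(\bm\zeta^t) \ge M$ to control the term $h(\bm\zeta^t)$, I would multiply through by $2\lambda_t$ to obtain
\[
\|\bm x^t - \bm\zeta^t\|^2 \le \|\bm x^t - \bm x^*\|^2 + 2\lambda_t\big(h(\bm x^*) - M\big).
\]
Since $\bm x^t \to \bm x^*$ and $\lambda_t \downarrow 0$, both terms on the right-hand side tend to $0$, so $\|\bm x^t - \bm\zeta^t\| \to 0$. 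Combining this with $\bm x^t \to \bm x^*$ through the triangle inequality then gives $\bm\zeta^t \to \bm x^*$, as required.

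The argument is essentially routine, and I do not anticipate a genuine obstacle. The only point demanding care is the treatment of $h(\bm\zeta^t)$, which could \emph{a priori} be arbitrarily negative and thereby destroy the estimate above. This is precisely where the assumption $\inf h > -\infty$ enters: it supplies the uniform lower bound $M$ that lets me discard this term cleanly and drive $\|\bm x^t - \bm\zeta^t\|^2$ to zero.
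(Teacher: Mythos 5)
Your proposal is correct and follows essentially the same route as the paper: both compare the value of the prox objective at $\bm\zeta^t$ against its value at the feasible point $\bm x^*$, use $\inf h > -\infty$ to discard $h(\bm\zeta^t)$, and conclude via the triangle inequality. The only cosmetic difference is that the paper reads membership $\bm\zeta^t \in \mathrm{dom}\,h$ off the same displayed inequality rather than citing finiteness of the envelope separately, so there is nothing substantive to add.
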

\begin{proof}
Under the assumptions, we have the following inequality:
\[
\frac1{2\lambda_t}\|\bm x^t - \bm \zeta^t\|^2 + \inf h \le \frac1{2\lambda_t}\|\bm x^t - \bm \zeta^t\|^2 + h(\bm \zeta^t) = e_{\lambda_t} h(\bm x^t)\le \frac1{2\lambda_t}\|\bm x^t - \bm x^*\|^2 + h(\bm x^*).
\]
Hence, we have $\bm \zeta^t\in {\rm dom}\,h$ for all $t$ and
\[
\|\bm \zeta^t - \bm x^*\|\le \|\bm \zeta^t - \bm x^t\| + \|\bm x^t -\bm x^*\|\le \sqrt{2\lambda_t(h(\bm x^*) - \inf h)+\|\bm x^t - \bm x^*\|^2} + \|\bm x^t - \bm x^*\| \to 0.
\]
\qedwhite
\end{proof}

Finally, recall that for a nonempty closed set $C$, the indicator function is defined as
\[
\delta_C(\bm x) = \begin{cases}
  0 & {\rm if}\  \bm x\in C,\\
  \infty & {\rm else}.
\end{cases}
\]
We define the (limiting) normal cone at any $\bm x\in C$ as $N_C(\bm x):= \partial \delta_C(\bm x)$. We let ${\rm dist}(\bm x, C) := \inf_{\bm y\in C}\|\bm y - \bm x\|$. The set of points in the nonempty closed set $C$ that are closest to a given $\bm x$ is denoted by $\proj_C(\bm x)$. One can observe that $\proj_C = \prox_{\delta_C}$. The set $\proj_C(\bm x)$ at a given $\bm x$ is always nonempty for a nonempty closed set $C$, and is a singleton when $C$ is in addition convex.

\section{Solution method for nonconvex nonsmooth optimization problems}\label{sec3}
\subsection{Successive difference-of-convex approximation method }
In this paper, we consider problem~\eqref{sparseprob} and assume that its objective satisfies the assumptions {\bf A1}, {\bf A2} and {\bf A3} in Section~\ref{intro}.
%the following assumptions:
%\begin{enumerate}[\bf { A}1.]
%  \item $f:\R^{n}\rightarrow\R_+$ is an $L$-smooth function i.e., there exists a constant $L>0$ so that
%\begin{align*}
%  \|\nabla f(\bm{x})-\nabla f(\bm{v})\|\le L\|\bm{x} -\bm{v}\|
%\end{align*}
%for any $\bm{x}$, $\bm{v}\in\R^{n}$.
%  \item $\A_i:\R^n\rightarrow\R^{n_i}$, $i=1,\ldots,m$, are linear mappings and $P_i:\R^{n_i}\rightarrow\R_+\cup\{\infty\}$, $i=0,\ldots,m$, are proper closed functions. The functions $P_i$, $i=0,\ldots,m$, are continuous in their respective domains, and
%  \[
% {\rm dom}\,P_0\cap \bigcap_{i=1}^m\A_i^{-1}({\rm dom}\,P_i) \neq \emptyset.
%  \]
%  Moreover, the proximal mapping of $\gamma P_i$ is easy to compute for every $\gamma > 0$ and for each $i=0,\ldots,m$. The sets ${\rm dom}\,P_i$, $i=1,\ldots,m$, are closed.
%  \item The function $f + P_0$ is level-bounded.
%\end{enumerate}
%Problem \eqref{sparseprob} arises naturally in various structured optimization problems. The functions $P_i$ can be used to model or induce different structures, and are nonconvex nonsmooth in general.
We will discuss some concrete applications of \eqref{sparseprob} in more details in Section~\ref{sec:struct}. In this section, we present an algorithm for solving \eqref{sparseprob}.

Notice that \eqref{sparseprob} is in general a nonsmooth nonconvex optimization problem. The nonsmooth nonconvex function $P_0 + \sum_{i=1}^m P_i\circ\A_i$ can be complicated in practice and handling it directly can be challenging. Indeed, although the proximal mappings of $\gamma P_i$, $i=0,\ldots,m$, are easy to compute, the proximal mapping of $P_0 + \sum_{i=1}^m P_i\circ \A_i$ may be hard to evaluate and hence the classical proximal gradient algorithm and its variants cannot be adapted directly and efficiently for solving \eqref{sparseprob}. In this paper, we suitably adapt a ``smoothing" scheme for solving the above nonconvex nonsmooth problem.
In this approach, in each iteration, we minimize the auxiliary function
\begin{equation}\label{sparseprob2}
F_{\bm \lambda}(\bm x) := f(\bm x) + P_0(\bm x) + \sum_{i=1}^m e_{\lambda_i} P_i(\A_i\bm x)
\end{equation}
approximately and then update $\bm x$ and ${\bm \lambda} = (\lambda_1,\cdots,\lambda_m)$, where $e_{\lambda_i}P_i$ is the Moreau envelope of $P_i$.

When $P_i$, $i=1,\ldots,m$ are all {\em convex functions}, the corresponding functions $e_{\lambda_i}P_i$ are Lipschitz differentiable \cite[Proposition~12.29]{BauCom11}. Hence, the function $F_{\bm \lambda}$ becomes the sum of a nonsmooth function $P_0$ and a smooth function, and can be minimized efficiently using, for example, the proximal gradient algorithm and its variants. This smoothing strategy has been widely used in the literature for convex problems; see \cite{Nes05}, and also \cite{BCG11} for a software package for convex optimization problems based on smoothing techniques. However, in our setting, $P_i$ is {\em not necessarily convex}. Thus, the corresponding Moreau envelope $e_{\lambda_i}P_i$ is {\it not necessarily smooth} and it is unclear whether $F_{\bm \lambda}$ can be minimized efficiently at first glance.

The key ingredient in our approach (where $P_i$ is possibly nonconvex) is the simple observation that for any nonnegative proper closed function $P$ and any $\mu > 0$,
\begin{equation}\label{moreauEnv}
  e_\mu P(\bm u) = \frac1{2\mu} \|\bm u\|^2 - \underbrace{\sup_{\bm y\in {\rm dom}\, P}\left\{\frac1\mu\bm u^\top\bm y - \frac1{2\mu}\|\bm y\|^2 - P(\bm y)\right\}}_{D_{\mu,P}(\bm u)}.
\end{equation}
Such a decomposition has been noted in \cite{Asplund73} when $P = \delta_C$ for some nonempty closed set $C$, and in \cite[Proposition~3]{Lucet06} for the general case. Then $D_{\mu,P}$, as the supreme of affine functions and being finite-valued, is convex continuous. Moreover, using the definition of $e_\mu P(\bm u)$, $\prox_{\mu P}(\bm u)$ and (6), we see that the supremum in $D_{\mu,P}(\bm u)$ is attained at any point in $\prox_{\mu P}(\bm u)$. Let $\bm y^* \in \prox_{\mu P}(\A\bm x)$. Then $\bm y^* \in {\rm dom}\,P$ and we have for any $\bm w$ that
\[
\begin{split}
% \nonumber to remove numbering (before each equation)
 & D_{\mu, P}(\bm w) - D_{\mu, P}({\cal A}\bm x) \\
 &= \sup_{\bm y\in {\rm dom}\, P}\left\{\frac1\mu\bm w^\top\bm y - \frac1{2\mu}\|\bm y\|^2 - P(\bm y)\right\} - \sup_{\bm y\in {\rm dom}\, P}\left\{\frac1\mu\bm (\A\bm x)^\top\bm y - \frac1{2\mu}\|\bm y\|^2 - P(\bm y)\right\} \\
 &\ge \frac1\mu\bm w^\top\bm y^* - \frac1{2\mu}\|\bm y^*\|^2 - P(\bm y^*) - \left(\frac1\mu\bm (\A\bm x)^\top\bm y^* - \frac1{2\mu}\|\bm y^*\|^2 - P(\bm y^*)\right) \\
 &= \frac1\mu {\bm y^*}^\top(\bm w - \A \bm x).
\end{split}
\]
This implies $\frac1\mu\prox_{\mu P}(\A\bm x) \subseteq \partial D_{\mu,P}(\A\bm x)$, from which we deduce further that
\begin{align}\label{inclusion}
  \frac1\mu \A^*\prox_{\mu P}(\A\bm x)\subseteq \A^*\partial D_{\mu,P}(\A\bm x) = \partial (D_{\mu,P}\circ\A)(\bm x),
\end{align}
where the last equality follows from \cite[Theorem~23.9]{Rock70} because $D_{\mu,P}$ is convex continuous. Thus, \eqref{sparseprob2} is the sum of a smooth function $f$, a nonsmooth nonconvex function $P_0$ whose proximal mapping is easy to compute, and a continuous difference-of-convex function such that a subgradient corresponding to its concave part is easy to compute; thanks to \eqref{inclusion} and Assumption {\bf A2}. Proximal gradient methods with majorization techniques can then be suitably applied to minimizing \eqref{sparseprob2}. For instance, one can apply the NPG$_{\rm major}$ described in the appendix. Specifically, one can apply NPG$_{\rm major}$ with
\[
h(\bm x) = f(\bm x) + \sum_{i=1}^m\frac1{2\lambda_i}\|\A_i\bm x\|^2, \ P(\bm x) = P_0(\bm x),\ g(\bm x) = \sum_{i=1}^m D_{\lambda_i,P_i}(\A_i\bm x).
\]
It is routine to check that this choice of $h$, $P$ and $g$ satisfies the assumptions required in the appendix. Moreover, the $F_{\bm \lambda}$ is level-bounded because $f+P_0$ is level-bounded by assumption and $e_{\lambda_i}P_i$ are nonnegative for each $i=1,\ldots,m$ since $P_i$ are nonnegative. Finally, $F_{\bm \lambda}$ is continuous in its domain because $P_0$ is. Hence all assumptions required in the appendix for applying NPG$_{\rm major}$ are satisfied and the method can be applied to minimizing $F_{\bm \lambda}$ by initializing at {\em any} point $\bm x^0\in {\rm dom}\,P_0$.

We now describe our method for solving \eqref{sparseprob} with its update rules below in Algorithm \ref{alg_sdcam}. We call this method the successive difference-of-convex approximation method (SDCAM).
\begin{algorithm}
\caption{The SDCAM for \eqref{sparseprob}}\label{alg_sdcam}

\begin{description}
\item[\bf Step 0.] Pick $m+1$ sequences of positive numbers with $\epsilon_t\downarrow 0$ and $\lambda_{i,t} \downarrow 0$ for $i=1,\ldots,m$, an $\bm x^{\rm feas} \in {\rm dom}\,P_0\cap \bigcap_{i=1}^m \A_i^{-1}({\rm dom}\,P_i)$, and an $\bm x^0\in {\rm dom}\,P_0$. Set $t = 0$.

\item[\bf Step 1.]  If $F_{\bm \lambda_t}(\bm x^t) \le F_{\bm \lambda_t}(\bm x^{\rm feas})$, set $\bm x^{t,0} = \bm x^t$. Else, set $\bm x^{t,0} = \bm x^{\rm feas}$.
\item[\bf Step 2.] Approximately minimize
  $F_{\bm \lambda_t}(\bm x)$, starting at $\bm x^{t,0}$, and terminating at $\bm x^{t,l_t}$ when
\begin{equation}\label{condition}
\begin{split}
  &{\rm dist}\left(0,\nabla f(\bm x^{t,l_t}) + \partial P_0(\bm x^{t,l_t+1}) + \sum_{i=1}^m\frac{1}{\lambda_{i,t}}\A^*_i[\A_i\bm x^{t,l_t} - \prox_{\lambda_{i,t}P_i}(\A_i\bm x^{t,l_t})]\right) \le \epsilon_t, \\
  &{\rm and}\ \ \ \ \|\bm x^{t,l_t+1} - \bm x^{t,l_t}\|\le \epsilon_t,\ \ \ \ F_{\bm \lambda_t}(\bm x^{t,l_t})\le F_{\bm \lambda_t}(\bm x^{t,0}).
\end{split}
\end{equation}
\item[\bf Step 3.] Update $\bm x^{t+1} = \bm x^{t,l_t}$ and $t=t+1$. Go to \bf Step 1.
\end{description}
\end{algorithm}

We would like to point out that {\bf Step 1} in SDCAM is crucial in our convergence analysis: this strategy was also used in the penalty decomposition method in \cite{LuZh13}. As we shall see in the proof of Theorem~\ref{optimalitycond} below, it ensures that \eqref{robust} can be applied at an accumulation point of $\{\bm x^t\}$.

\subsection{Theoretical guarantee for global convergence}

In this section, we first discuss how $F_{\bm \lambda_t}$ can be approximately minimized so that \eqref{condition} is satisfied at the $t$-th iteration and comment on the computational complexity. Then we prove the convergence of the SDCAM under suitable assumptions.

As discussed above, $F_{\bm \lambda_t}$ can be minimized by the NPG$_{\rm major}$ outlined in the appendix. Moreover, due to \eqref{inclusion}, one can choose $\bm \zeta^{t,l} = \sum_{i=1}^m\frac{1}{\lambda_{i,t}}\A_i^*\bm \zeta_i^{t,l}$ in the algorithm with
\begin{equation}\label{choicezeta}
\bm \zeta_i^{t,l}\in \prox_{\lambda_{i,t}P_i}(\A_i\bm x^{t,l})
\end{equation}
for each $i=1,\ldots,m$ and $l\ge 0$ so that $\sum_{i=1}^m\frac{1}{\lambda_{i,t}}\A_i^*\bm \zeta_i^{t,l}$ lies in the subdifferential of $\sum_{i=1}^m (D_{\lambda_{i,t},P_i}\circ \A_i)$ at $\bm x^{t,l}$.
Using this special version of NPG$_{\rm major}$, we can show that the termination criterion \eqref{condition} is satisfied after finitely many inner iterations.

\begin{THEO}\label{thm0}
  Suppose that the NPG$_{\rm major}$ is applied with $\bm \zeta^{t,l} = \sum_{i=1}^m\frac{1}{\lambda_{i,t}}\A_i^*\bm \zeta_i^{t,l}$, where $\bm \zeta_i^{t,l}$ are chosen as in \eqref{choicezeta}, to minimizing $F_{\bm \lambda_t}$ in the $t$-th iteration of SDCAM. Then the criterion \eqref{condition} is satisfied after finitely many inner iterations.
\end{THEO}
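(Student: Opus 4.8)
The plan is to read the termination criterion \eqref{condition} directly off the first-order optimality condition of the proximal subproblem solved at each inner iteration of NPG$_{\rm major}$, and then to invoke the convergence guarantees of NPG$_{\rm major}$ (verified, for the present choice of $h$, $P$ and $g$, in the text preceding the theorem and in the appendix) to drive the relevant quantities below $\epsilon_t$. Recall that with $h(\bm x) = f(\bm x) + \sum_{i=1}^m\frac1{2\lambda_{i,t}}\|\A_i\bm x\|^2$, $P = P_0$ and $g(\bm x) = \sum_{i=1}^m D_{\lambda_{i,t},P_i}(\A_i\bm x)$, the inner iterate $\bm x^{t,l+1}$ is, by construction of NPG$_{\rm major}$, a minimizer of $\bm x\mapsto \langle \nabla h(\bm x^{t,l}) - \bm\zeta^{t,l}, \bm x\rangle + \frac{L_l}{2}\|\bm x - \bm x^{t,l}\|^2 + P_0(\bm x)$, where $L_l$ is the accepted step-size parameter and, by \eqref{choicezeta} and \eqref{inclusion}, $\bm\zeta^{t,l} = \sum_{i=1}^m\frac{1}{\lambda_{i,t}}\A_i^*\bm\zeta_i^{t,l} \in \partial g(\bm x^{t,l})$.

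First I would write down the optimality condition of this strongly convex subproblem,
\[
0 \in L_l(\bm x^{t,l+1} - \bm x^{t,l}) + \nabla h(\bm x^{t,l}) - \bm\zeta^{t,l} + \partial P_0(\bm x^{t,l+1}).
\]
Substituting $\nabla h(\bm x^{t,l}) = \nabla f(\bm x^{t,l}) + \sum_{i=1}^m\frac{1}{\lambda_{i,t}}\A_i^*\A_i\bm x^{t,l}$ together with $\bm\zeta^{t,l} = \sum_{i=1}^m\frac{1}{\lambda_{i,t}}\A_i^*\bm\zeta_i^{t,l}$ and $\bm\zeta_i^{t,l}\in\prox_{\lambda_{i,t}P_i}(\A_i\bm x^{t,l})$, the combination $\nabla h(\bm x^{t,l}) - \bm\zeta^{t,l}$ collapses exactly to $\nabla f(\bm x^{t,l}) + \sum_{i=1}^m\frac{1}{\lambda_{i,t}}\A_i^*[\A_i\bm x^{t,l} - \prox_{\lambda_{i,t}P_i}(\A_i\bm x^{t,l})]$, which is precisely the vector whose $\partial P_0(\bm x^{t,l+1})$-perturbation appears inside the distance in the first line of \eqref{condition}. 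Consequently $-L_l(\bm x^{t,l+1} - \bm x^{t,l})$ lies in that perturbed set, so the distance there is at most $L_l\|\bm x^{t,l+1} - \bm x^{t,l}\|$. Thus both the first and second requirements in \eqref{condition} are controlled by the size of the successive difference $\|\bm x^{t,l+1} - \bm x^{t,l}\|$, provided $L_l$ stays bounded.

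It then remains to close the argument using the properties of NPG$_{\rm major}$. Since $F_{\bm\lambda_t}$ is level-bounded (as argued above) and hence bounded below, the nonmonotone line search of NPG$_{\rm major}$ (i) keeps the accepted parameters $L_l$ bounded above, (ii) forces $F_{\bm\lambda_t}(\bm x^{t,l})\le F_{\bm\lambda_t}(\bm x^{t,0})$ for all $l$ through its (non-increasing) reference value, which gives the third requirement in \eqref{condition} for free, and (iii) yields a summable-square decrease that forces $\|\bm x^{t,l+1} - \bm x^{t,l}\|\to 0$. Combining (i) and (iii) with the bound from the previous step, for all $l$ large enough we have simultaneously $L_l\|\bm x^{t,l+1} - \bm x^{t,l}\|\le\epsilon_t$ and $\|\bm x^{t,l+1} - \bm x^{t,l}\|\le\epsilon_t$; together with (ii) this shows that \eqref{condition} holds after finitely many inner iterations, with $l_t$ taken to be any such $l$.

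I expect the main obstacle to be the third step: extracting from the appendix exactly what is needed here, namely that the step-size parameters $L_l$ remain uniformly bounded and that $\|\bm x^{t,l+1} - \bm x^{t,l}\|\to 0$ within a single outer iteration (with $\bm\lambda_t$ held fixed). These are not purely formal, as they rest on the sufficient-decrease inequality of the nonmonotone line search and on $F_{\bm\lambda_t}$ being bounded below, and one must check that the nonmonotone reference value does not spoil the monotone-type bound $F_{\bm\lambda_t}(\bm x^{t,l})\le F_{\bm\lambda_t}(\bm x^{t,0})$ required in \eqref{condition}. The identification carried out in the second step, by contrast, is a direct computation once the subproblem optimality condition and the choice \eqref{choicezeta} are in hand.
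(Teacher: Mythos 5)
Your proposal is correct and follows essentially the same route as the paper's proof: read off the optimality condition of the accepted proximal subproblem to see that $-\bar L_{t,l}(\bm x^{t,l+1}-\bm x^{t,l})$ lies in the set appearing in \eqref{condition}, then invoke the appendix results that $\{\bar L_{t,l}\}_{l\ge 0}$ is bounded (Proposition~\ref{prop:bd}) and $\|\bm x^{t,l+1}-\bm x^{t,l}\|\to 0$ (Proposition~\ref{prop:conv}), together with the descent guarantee \eqref{descent} for the third requirement. The only difference is that you spell out the substitution $\nabla h(\bm x^{t,l})-\bm\zeta^{t,l}$ explicitly, which the paper leaves implicit in \eqref{conditionsub}.
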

\begin{proof}
According to the convergence properties of the NPG$_{\rm major}$, one obtains a sequence $\{\bm x^{t,l}\}_{l\ge 0}$ satisfying
\begin{enumerate}
  \item $\lim\limits_{l\to\infty}\|\bm x^{t,l+1}-\bm x^{t,l}\| = 0$ (Proposition~\ref{prop:conv} in the appendix), $F_{\bm \lambda_t}(\bm x^{t,l})\le F_{\bm \lambda_t}(\bm x^{t,0})$ (thanks to \eqref{descent}); and
  \item for any $l\ge 0$ (see \eqref{prox-subprob}),
  \begin{equation}\label{conditionsub}
  \bm x^{t,l+1}\!\in\! \Argmin_{\bm x}\left\{\left(\nabla f(\bm x^{t,l})+\sum_{i=1}^m\frac{\bm \omega^{t,l}_i}{\lambda_{i,t}}\right)^\top\!\! \bm x + \frac{\bar L_{t,l}}{2}\|\bm x - \bm x^{t,l}\|^2 + P_0(\bm x)\right\},
  \end{equation}
  where $\bm \omega^{t,l}_i:= \A_i^*[\A_i\bm x^{t,l}- \bm \zeta_i^{t,l}]$.
  Here, the sequence $\{\bar L_{t,l}\}_{l\ge 0}$ can be shown to be bounded; see Proposition~\ref{prop:bd} in the appendix.
\end{enumerate}
Using \cite[Exercise~8.8(c)]{RockWets98}, the condition \eqref{conditionsub} implies
\[
\begin{split}
  &\bm 0 \in \nabla f(\bm x^{t,l})+ \sum_{i=1}^m\frac{1}{\lambda_{i,t}}\A_i^*[\A_i\bm x^{t,l}- \bm \zeta_i^{t,l}] +\bar L_{t,l}(\bm x^{t,l+1} - \bm x^{t,l}) + \partial P_0(\bm x^{t,l+1}),\\
  \Rightarrow & -\bar L_{t,l}(\bm x^{t,l+1} - \bm x^{t,l}) \in \nabla f(\bm x^{t,l})+ \sum_{i=1}^m\frac{1}{\lambda_{i,t}}\A_i^*[\A_i\bm x^{t,l}- \bm \zeta_i^{t,l}] + \partial P_0(\bm x^{t,l+1}),
\end{split}
\]
from which \eqref{condition} can be seen to hold with $l_t = l$ when $l$ is sufficiently large because $\lim\limits_{l\to\infty}\|\bm x^{t,l+1}-\bm x^{t,l}\| = 0$ and $\{\bar L_{t,l}\}_{l\ge 0}$ is bounded.\qedwhite
\end{proof}

\begin{REMA}[{{\bf Computational complexity}}]
  \normalfont Suppose that the NPG$_{\rm major}$ is applied to minimizing $F_{\bm \lambda_t}$ in each iteration of SDCAM, with the $\bm \zeta^{t,l}$ chosen as in Theorem~\ref{thm0}. Then one has to repeatedly solve subproblems of the form \eqref{conditionsub} for various values of $\bm \lambda_t$ and $\beta > 0$ (in place of $\bar L_{t,l}$).
These computations are easy under the assumption that the proximal mapping $\gamma P_i$, $i=1,\ldots,m$, $\gamma > 0$, is easy to compute.
Indeed, the subproblems can be rewritten as
  \begin{equation}
    \bm x^{t,l+1}\in \prox_{\frac{1}{\beta}P_0}\left(\bm x^{t,l}- \frac{1}{\beta} \left(\nabla f(\bm x^{t,l})+ \sum_{i=1}^m\frac{1}{\lambda_{i,t}}\A_i^*[\A_i\bm x^{t,l}- \bm \zeta_i^{t,l}]\right)\right),
\label{subproblem_prox}
  \end{equation}
where  $\bm \zeta_i^{t,l}\in \prox_{\lambda_{i,t}P_i}(\A_i\bm x^{t,l})$.
\end{REMA}

We now state and prove our convergence result for SDCAM. We will comment on \eqref{CQ} in Remark~\ref{rem2} below before proving the theorem.
\begin{THEO}[{{\bf Convergence of SDCAM}}]
  Let $\{\bm x^t\}$ be the sequence generated by SDCAM for solving \eqref{sparseprob}. Then $\{\bm x^t\}$ is bounded. Let $\bm x^*$ be an accumulation point of this sequence. Then we have the following results.
  \begin{enumerate}[{\rm (i)}]
    \item It holds that $\bm x^*\in {\rm dom}\,P_0 \cap \bigcap_{i=1}^m \A_i^{-1}({\rm dom}\,P_i)$.
    \item Suppose that
    \begin{equation}\label{CQ}
    \begin{split}
    &\bm y_0 + \sum_{i=1}^m \A_i^*\bm y_i = \bm 0 \ \ \mbox{\rm and}\ \ \bm y_0 \in \partial^\infty P_0(\bm x^*),\ \ \bm y_i \in \partial^\infty P_i(\A_i\bm x^*) \ \mbox{\rm for }i=1,\ldots,m \\
    &\qquad\qquad\qquad\qquad \Longrightarrow \ \bm y_i = \bm 0\ \ \mbox{\rm for }i=0,\ldots,m.
    \end{split}
    \end{equation}
    Then $\bm x^*$ is a stationary point of \eqref{sparseprob}, i.e.,
    \begin{equation}\label{opt_con}
     \bm 0 \in \nabla f(\bm x^*) + \partial P_0(\bm x^*) + \sum_{i=1}^m\A_i^*\partial P_i(\A_i\bm x^*).
    \end{equation}
  \end{enumerate}
\label{optimalitycond}
\end{THEO}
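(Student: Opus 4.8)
The plan is to establish, in order, (a) boundedness of $\{\bm x^t\}$, (b) the domain membership in (i), and (c) the stationarity in (ii) by passing to the limit in the termination condition \eqref{condition}. Throughout I write $\bm u^t:=\bm x^{t+1}=\bm x^{t,l_t}$ and $\bm v^t:=\bm x^{t,l_t+1}$, and for each $i$ I let $\bm\zeta_i^t$ denote the prox point in the selection realizing \eqref{condition}.

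\emph{(a) Boundedness.} In either branch of \textbf{Step~1} one has $F_{\bm\lambda_t}(\bm x^{t,0})\le F_{\bm\lambda_t}(\bm x^{\rm feas})$, and the last requirement of \eqref{condition} gives $F_{\bm\lambda_t}(\bm u^t)\le F_{\bm\lambda_t}(\bm x^{t,0})$. Since $P_i\ge 0$ forces $e_{\lambda_{i,t}}P_i\ge 0$ while \eqref{inf_ineq} gives $e_{\lambda_{i,t}}P_i\le P_i$, I can sandwich
\[
f(\bm u^t)+P_0(\bm u^t)\le F_{\bm\lambda_t}(\bm u^t)\le F_{\bm\lambda_t}(\bm x^{\rm feas})\le F(\bm x^{\rm feas}),
\]
the rightmost quantity being finite because $\bm x^{\rm feas}\in{\rm dom}\,P_0\cap\bigcap_{i=1}^m\A_i^{-1}({\rm dom}\,P_i)$. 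Thus every $\bm u^t$ lies in the sublevel set $\{\bm x:f(\bm x)+P_0(\bm x)\le F(\bm x^{\rm feas})\}$, which is bounded by Assumption~\textbf{A3}; hence $\{\bm x^t\}$ is bounded.

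\emph{(b) Part (i).} Since $\bm x^*$ is an accumulation point of $\{\bm x^t\}$ it is also one of $\{\bm u^t\}$, so after passing to a subsequence (still indexed by $t$) we have $\bm u^t\to\bm x^*$. As $f+P_0$ is proper, closed and level-bounded it is bounded below, so $M:=F(\bm x^{\rm feas})-\inf(f+P_0)<\infty$ and the sandwich gives $\sum_{i=1}^m e_{\lambda_{i,t}}P_i(\A_i\bm u^t)\le M$. For $P_0$, the bound $P_0(\bm u^t)\le F(\bm x^{\rm feas})-f(\bm u^t)$ together with the lower semicontinuity of $P_0$ and continuity of $f$ yields $P_0(\bm x^*)\le F(\bm x^{\rm feas})-f(\bm x^*)<\infty$. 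For each $i$, from $e_{\lambda_{i,t}}P_i(\A_i\bm u^t)=\frac1{2\lambda_{i,t}}\|\A_i\bm u^t-\bm\zeta_i^t\|^2+P_i(\bm\zeta_i^t)\le M$ and $P_i\ge 0$ I get $\|\A_i\bm u^t-\bm\zeta_i^t\|^2\le 2M\lambda_{i,t}\to 0$, so $\bm\zeta_i^t\to\A_i\bm x^*$; since $\bm\zeta_i^t\in{\rm dom}\,P_i$ and ${\rm dom}\,P_i$ is closed (Assumption~\textbf{A2}), $\A_i\bm x^*\in{\rm dom}\,P_i$. This is exactly the mechanism of Lemma~\ref{prox_limit}. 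By continuity of the $P_i$ on their domains I also record $P_0(\bm v^t)\to P_0(\bm x^*)$ (here $\bm v^t\in{\rm dom}\,P_0$ and $\bm v^t\to\bm x^*$, as $\|\bm v^t-\bm u^t\|\le\epsilon_t\to 0$) and $P_i(\bm\zeta_i^t)\to P_i(\A_i\bm x^*)$.

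\emph{(c) Part (ii).} Condition~\eqref{condition} furnishes $\bm w_0^t\in\partial P_0(\bm v^t)$ so that, setting $\bm w_i^t:=\frac1{\lambda_{i,t}}(\A_i\bm u^t-\bm\zeta_i^t)$, the residual $\bm\xi^t:=\nabla f(\bm u^t)+\bm w_0^t+\sum_{i=1}^m\A_i^*\bm w_i^t$ satisfies $\|\bm\xi^t\|\le\epsilon_t\to 0$; moreover $\bm w_i^t\in\partial P_i(\bm\zeta_i^t)$ by \eqref{key_inclusion}. The step I expect to be the main obstacle is proving that $\{(\bm w_0^t,\dots,\bm w_m^t)\}$ is bounded — this is precisely what the condition~\eqref{CQ} is designed to give. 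If it were unbounded, set $R_t:=\|\bm w_0^t\|+\sum_i\|\bm w_i^t\|\to\infty$ along a further subsequence; the normalized multipliers converge (after extraction) to some $\bm y_0,\bm y_i$ with $\|\bm y_0\|+\sum_i\|\bm y_i\|=1$, and dividing the identity defining $\bm\xi^t$ by $R_t$ and letting $t\to\infty$ (the $\bm\xi^t$ and $\nabla f$ terms vanish) gives $\bm y_0+\sum_i\A_i^*\bm y_i=\bm 0$. Since $R_t^{-1}\downarrow 0$, $\bm v^t\stackrel{P_0}{\to}\bm x^*$ and $\bm\zeta_i^t\stackrel{P_i}{\to}\A_i\bm x^*$ (the value convergences from (b)), the horizon part of \eqref{robust} yields $\bm y_0\in\partial^\infty P_0(\bm x^*)$ and $\bm y_i\in\partial^\infty P_i(\A_i\bm x^*)$; then \eqref{CQ} forces all $\bm y_i=\bm 0$, contradicting $\|\bm y_0\|+\sum_i\|\bm y_i\|=1$. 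Hence the multipliers are bounded, and extracting $\bm w_0^t\to\bar{\bm w}_0$ and $\bm w_i^t\to\bar{\bm w}_i$, the limiting part of \eqref{robust} gives $\bar{\bm w}_0\in\partial P_0(\bm x^*)$ and $\bar{\bm w}_i\in\partial P_i(\A_i\bm x^*)$. Passing to the limit in the identity for $\bm\xi^t$ then yields $\bm 0=\nabla f(\bm x^*)+\bar{\bm w}_0+\sum_i\A_i^*\bar{\bm w}_i$, that is, \eqref{opt_con}.
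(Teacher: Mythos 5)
Your proof is correct, and while the overall skeleton (bound the iterates via the descent safeguard in \textbf{Step~1}, pass to the limit in \eqref{condition}, and use \eqref{CQ} through a normalization--contradiction argument to bound the multipliers) matches the paper's, your treatment of the key boundedness step in part (ii) is genuinely different and noticeably cleaner. The paper normalizes by $r_t:=\|\bm \eta^t\|+\sum_{i}\frac{1}{\lambda_{i,t-1}}\|\A_i^*(\A_i\bm x^t-\bm \zeta_i^t)\|$, i.e.\ by the norms \emph{after} applying $\A_i^*$; since boundedness of $\A_i^*\bm w_i^t$ does not control $\bm w_i^t$ when $\A_i^*$ has a nontrivial kernel, the paper must then run two further normalization sub-arguments (the claims that $w_i^t$ and $\nu_i^t$ are bounded), each invoking \eqref{CQ} again with a unit vector $\bm \psi_i^*$ or $\bm\phi_i^*$ satisfying $\A_i^*\bm\psi_i^*=\bm 0$. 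By instead normalizing with $R_t:=\|\bm w_0^t\|+\sum_i\|\bm w_i^t\|$ — the norms of the subgradients in their native spaces $\R^{n_i}$, \emph{before} the adjoint is applied — you obtain boundedness of the full multiplier vector in a single application of \eqref{CQ}, after which the limiting part of \eqref{robust} and continuity of $\A_i^*$ finish the argument directly. Your choice of normalization thus buys a one-shot proof of exactly the statement the paper needs three contradiction arguments to reach; the paper's route buys nothing extra in return. The remaining ingredients (the value convergences $P_0(\bm v^t)\to P_0(\bm x^*)$ and $P_i(\bm\zeta_i^t)\to P_i(\A_i\bm x^*)$ needed for \eqref{robust}, the use of \eqref{key_inclusion} for $\bm w_i^t\in\partial P_i(\bm\zeta_i^t)$, and the Lemma~\ref{prox_limit}-style argument for part (i)) are handled the same way in both proofs. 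The only cosmetic caveat is that the horizon inclusion in \eqref{robust} is stated with $\alpha_t\downarrow 0$, so one should pass to a further subsequence along which $1/R_t$ is decreasing; the paper is equally informal on this point.
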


\begin{REMA}\label{rem2}{\bf (Comments on condition \eqref{CQ})}
% \normalfont The following remarks are for the claim  (ii) in Theorem \ref{optimalitycond}.
  \begin{enumerate}[{\rm (i)}]
  \item Condition \eqref{CQ} is a classical constraint qualification for nonconvex nonsmooth optimization problems; see \cite[Corollary~10.9]{RockWets98}. It is satisfied, for example, when $\A_i$ equals the identity map for all $i$, and all but one $P_i$ are locally Lipschitz so that $\partial^\infty P_i(\bm x^*) = \{\bm 0\}$ for all but one $P_i$; see \cite[Exercise~10.10]{RockWets98}.
  \item Under \eqref{CQ}, it can be shown using \cite[Theorem~10.1]{RockWets98}, \cite[Proposition~10.5]{RockWets98} and \cite[Theorem~10.6]{RockWets98} that any local minimizer $\bm x^*$ of \eqref{sparseprob} satisfies \eqref{opt_con}.
\end{enumerate}
\end{REMA}

\begin{proof}
  Using the nonnegativity of $P_i$, the last criterion in \eqref{condition} and the definitions of $F_{\bm \lambda}$ and $\bm x^{t,0}$, we see that
  \begin{equation}\label{level_bd}
  f(\bm x^t) + P_0(\bm x^{t})\le F_{\bm \lambda_{t-1}}(\bm x^{t}) \le F_{\bm \lambda_{t-1}}(\bm x^{\rm feas}) \le F(\bm x^{\rm feas})=:F_{\rm feas},
  \end{equation}
  where the last inequality follows from the definitions of $F$, $F_{\bm \lambda}$ and \eqref{inf_ineq}.
  From this, one immediately conclude that $\{\bm x^t\}$ is bounded because $f+P_0$ is level-bounded.

  Next, let $\bm x^*$ be an accumulation point of $\{\bm x^t\}$. Then there exists a subsequence $\{\bm x^t\}_{t\in{\cal I}}$ so that $\lim\limits_{t\in \cal I}\bm x^{t} = \bm x^*$. Using this, \eqref{level_bd}, and the lower semicontinuity of $f+P_0$, we further see that
  \[
  f(\bm x^*) + P_0(\bm x^*) \le \liminf_{t\in \cal I} f(\bm x^t) + P_0(\bm x^{t})\le F_{\rm feas} < \infty.
  \]
  This shows that $\bm x^*\in {\rm dom}\,P_0$. On the other hand, since $P_i$ is nonnegative, we have
  \[
  \begin{split}
    0&\le \frac12{\rm dist}^2(\A_i\bm x,{\rm dom}\,P_i) = \inf_{\bm y\in {\rm dom}\,P_i}\left\{\frac12\|\A_i\bm x - \bm y\|^2\right\}\\
    &\le \inf_{\bm y\in {\rm dom}\,P_i}\left\{\frac12\|\A_i\bm x - \bm y\|^2 + \lambda_{i,t-1}P_i(\bm y)\right\} = \lambda_{i,t-1} e_{\lambda_{i,t-1}}P_i(\A_i\bm x)
  \end{split}
  \]
  for all $\bm x$ and for each $i=1,\ldots,m$. Using this, the finiteness of $\underline{\ell} := \inf\{f + P_0\}$ (thanks to the level-boundedness of $f+P_0$), and the definition of $F_{\bm \lambda}$, we have for each $i = 1,\ldots,m$ that
  \[
  \underline{\ell} + \frac1{2\lambda_{i,t-1}}{\rm dist}^2(\A_i\bm x^t, {\rm dom}\,P_i) \le \underline{\ell} + e_{\lambda_{i,t-1}}P_i(\A_i\bm x^t)\le F_{\bm \lambda_{t-1}}(\bm x^{t})\le F_{\rm feas},
  \]
  where the last inequality follows from \eqref{level_bd}.
  Since $\lambda_{i,t-1}\downarrow 0$, we conclude that ${\rm dist}^2(\A_i\bm x^*,{\rm dom}\,P_i) \le 0$ and hence $\A_i\bm x^*\in {\rm dom}\,P_i$ because ${\rm dom}\,P_i$ is closed.

  We now prove \eqref{opt_con} under \eqref{CQ}. For notational simplicity, let $\bm y^{t+1} := \bm x^{t,l_t+1}$. Then $\lim\limits_{t\in \cal I}\bm y^t = \bm x^*$ thanks to the second relation in \eqref{condition}. Moreover, from the first relation in \eqref{condition}, we see that there exist $\bm \xi^t$ with $\|\bm \xi^t\|\le \epsilon_{t-1}$, $\bm \eta^t \in \partial P_0(\bm y^t)$ and $\bm \zeta_i^t\in \prox_{\lambda_{i,t-1}P_i}(\A_i\bm x^t)$ for each $i=1,\ldots,m$ so that
  \begin{equation}\label{condition2}
    \bm \xi^t = \nabla f(\bm x^t) + \bm \eta^t + \sum_{i=1}^m\frac1{\lambda_{i,t-1}}\A_i^*(\A_i\bm x^t - \bm \zeta_i^t).
  \end{equation}

  Define
    \[
    r_t:= \|\bm \eta^t\| + \sum_{i=1}^m\frac1{\lambda_{i,t-1}}\|\A_i^*(\A_i\bm x^t - \bm \zeta_i^t)\|.
    \]
    We claim that $\{r_t\}_{t\in \cal I}$ is bounded.
    Suppose to the contrary that $\{r_t\}_{t\in \cal I}$ is unbounded and we assume without loss of generality that $\lim\limits_{t\in \cal I}r_t = \infty$ and $\inf\limits_{t\in \cal I} r_t > 0$. Then the sequences $\{\frac{1}{r_t}\bm \eta^t\}_{t\in \cal I}$ and $\{\frac1{\lambda_{i,t-1}r_t}\A_i^*(\A_i\bm x^t - \bm \zeta_i^t)\}_{t\in \cal I}$ for $i=1,\ldots,m$ are bounded. Without loss of generality, we may assume
    \begin{equation}\label{proof1rel3}
    \lim\limits_{t\in \cal I}\frac{\bm \eta^t}{r_t} =\bm \eta^*\ \ {\rm and}\ \  \lim\limits_{t\in \cal I}\A_i^*\left(\frac{\A_i\bm x^t - \bm \zeta_i^t}{\lambda_{i,t-1}r_t}\right)= \bm \chi_i^*
    \end{equation}
    for some $\bm \eta^*$ and $\bm \chi_i^*$, $i=1,\ldots,m$.
    Notice that
    \begin{equation}\label{proof1rel2}
    1 = \frac{ \|\bm \eta^t\| + \sum_{i=1}^m\frac1{\lambda_{i,t-1}}\|\A_i^*(\A_i\bm x^t - \bm \zeta_i^t)\|}{r_t} \ \Rightarrow 1 = \|\bm \eta^*\| + \sum_{i=1}^m\|\bm \chi_i^*\|.
    \end{equation}
    In addition, by dividing $r_t$ from both sides of \eqref{condition2} and passing to the limit along $t\in \cal I$, we conclude that
    \begin{equation}\label{sum}
    \bm 0 = \bm \eta^* + \sum_{i=1}^m\bm \chi_i^*.
    \end{equation}
    On the other hand, since $\bm \eta^t \in \partial P_0(\bm y^t)$ and $\lim\limits_{t\in \cal I}r_t = \infty$, we have from \eqref{proof1rel3}, the continuity of $P_0$ in its domain and \eqref{robust} that
    \begin{equation}\label{eq:cc_0}
      \bm \eta^*\in \partial^\infty P_0(\bm x^*).
    \end{equation}
    Next, we prove that $\bm\chi_i^*\in\A_i^*\partial^{\infty}P_i(\A_i\bm x^*)$ for $i=1,...,m$. To proceed, we define for each $i=1,\ldots,m$,
    \[
    w_i^t:= \left\|\frac{\A_i\bm x^t - \bm \zeta_i^t}{\lambda_{i,t-1}r_t}\right\|
    \]
    and claim that $\{w_i^t\}_{t\in\cal I}$ is bounded for all $i=1,\ldots,m$. For an arbitrarily fixed $i\in\{1,...,m\}$, suppose to the contrary that $\{w_i^t\}_{t\in\cal I}$ is unbounded and we assume without loss of generality that $\lim\limits_{t\in \cal I}w_i^t = \infty$ and that
    \begin{equation}\label{w_bd}
    \lim\limits_{t\in \cal I}\frac1{w_i^t}\frac{\A_i\bm x^t - \bm \zeta_i^t}{\lambda_{i,t-1}r_t} = \bm \psi_i^*
    \end{equation}
    for some $\bm \psi_i^*$ with unit norm.
    Then from the second equation in \eqref{proof1rel3}, we have
    \begin{equation}\label{psi_0}
      \|\bm \psi_i^*\| = 1\ \ {\rm and}\ \ \A_i^*\bm \psi_i^* =\bm 0.
    \end{equation}
    In addition, we observe from \eqref{w_bd} that
    \[
    \bm \psi_i^* = \lim\limits_{t\in \cal I}\frac1{w_i^t}\frac{\A_i\bm x^t - \bm \zeta_i^t}{\lambda_{i,t-1}r_t}\in \left\{\lim_{t\in \cal I}\frac{1}{w_i^tr_t}\bm u^t:\; \bm u^t \in \partial P_i(\bm \zeta_i^t) \mbox{ for each $t$} \right\} \subseteq \partial^\infty P_i(\A_i\bm x^*).
    \]
     where the first inclusion follows from \eqref{key_inclusion} and the second inclusion follows from Lemma~\ref{prox_limit} (so that $\lim\limits_{t\in \cal I}\bm\zeta^t_i= \A_i\bm x^*$ and $\{\bm \zeta^t_i\}_{t\in \cal I}\subseteq {\rm dom}\,P_i$), the continuity of $P_i$ in its domain and \eqref{robust}. These together with the facts $\bm 0\in\partial^{\infty}P_0(\bm x^*)$ , $\bm 0 \in\partial^{\infty}P_i(\A_i\bm x^*)$ ($i=1,...,m$)\footnote{These follow from (i) and \cite[Corollary~8.10]{RockWets98}.} and \eqref{psi_0} contradict \eqref{CQ}. Consequently, $\{w_i^t\}_{t\in\cal I}$ is bounded for all $i=1,\ldots,m$. Then, without loss of generality, we assume that $\lim\limits_{t\in \cal I}\frac{\A_i\bm x^t - \bm \zeta_i^t}{\lambda_{i,t-1}r_t}$ exists for all $i=1,...,m$. Then, for each $i=1,\ldots,m$, we observe from \eqref{proof1rel3} that
    \[
    \bm \chi_i^* = \A_i^*\lim\limits_{t\in \cal I}\frac{\A_i\bm x^t - \bm \zeta_i^t}{\lambda_{i,t-1}r_t} \in \A_i^*\left\{\lim_{t\in \cal I}\frac{1}{r_t}\bm u^t:\; \bm u^t \in \partial P_i(\bm \zeta_i^t) \mbox{ for each $t$} \right\} \subseteq \A_i^*\partial^\infty P_i(\A_i\bm x^*),
    \]
    where the first inclusion follows from \eqref{key_inclusion} and the second inclusion follows from Lemma~\ref{prox_limit} (so that $\lim\limits_{t\in \cal I}\bm\zeta^t_i= \A_i\bm x^*$ and $\{\bm \zeta^t_i\}_{t\in \cal I}\subseteq {\rm dom}\,P_i$ for each $i=1,\ldots,m$), the continuity of $P_i$ in its domain and \eqref{robust}.
    These together with \eqref{proof1rel2}, \eqref{sum} and \eqref{eq:cc_0} contradict \eqref{CQ}. Consequently, $\{r_t\}_{t\in \cal I}$ is bounded.

    Since $\{r_t\}_{t\in \cal I}$ is bounded, we may assume without loss of generality that
    \begin{equation}\label{eta}
    \lim\limits_{t\in \cal I} \bm \eta^t  =\tilde{\bm \eta}^*\ \ {\rm and}\ \  \lim\limits_{t\in \cal I}\frac1{\lambda_{i,t-1}}\A_i^*(\A_i\bm x^t - \bm \zeta_i^t) = \tilde {\bm \chi}^*_i
    \end{equation}
    for some $\tilde{\bm \eta}^*$ and $\tilde{\bm \chi}^*_i$, $i=1,\ldots,m$.
    Then we have from \eqref{robust} and the continuity of $P_0$ in its domain that
    \begin{equation}\label{haha1}
      \tilde{\bm \eta}^*\in \partial P_0(\bm x^*).
    \end{equation}
    Next, we prove that $\tilde {\bm \chi}^*_i\in\A_i^*\partial P_i(\A_i\bm x^*)$ for $i=1,...,m$. To proceed, we define for each $i=1,\ldots,m$,
    \[
    \nu_i^t:= \left\|\frac{\A_i\bm x^t - \bm \zeta_i^t}{\lambda_{i,t-1}}\right\|
    \]
    and claim that $\{\nu_i^t\}_{t\in\cal I}$ is bounded for all $i=1,\ldots,m$. For an arbitrary fixed $i\in\{1,...,m\}$, suppose to the contrary that $\{\nu_i^t\}_{t\in\cal I}$ is unbounded and we assume without loss of generality that $\lim\limits_{t\in \cal I}\nu_i^t = \infty$ and that
    \begin{equation}\label{nu_bd}
    \lim\limits_{t\in \cal I}\frac1{\nu_i^t}\frac{\A_i\bm x^t - \bm \zeta_i^t}{\lambda_{i,t-1}} = \bm\phi_i^*
    \end{equation}
    for some $\bm \phi^*_i$ with unit norm.
    Notice from the second equation of \eqref{eta} that
    \begin{equation}\label{psi}
      \|\bm \phi_i^*\| = 1\ \ {\rm and}\ \ \A_i^*\bm \phi_i^* =\bm 0.
    \end{equation}
    In addition, we observe from \eqref{nu_bd} that
    \[
    \bm\phi_i^* = \lim\limits_{t\in \cal I}\frac1{\nu_i^t}\frac{\A_i\bm x^t - \bm \zeta_i^t}{\lambda_{i,t-1}}\in \left\{\lim_{t\in \cal I}\frac{1}{\nu_i^t}\bm u^t:\; \bm u^t \in \partial P_i(\bm \zeta_i^t) \mbox{ for each $t$} \right\} \subseteq \partial^\infty P_i(\A_i\bm x^*).
    \]
     where the first inclusion follows from \eqref{key_inclusion} and the second inclusion follows from Lemma~\ref{prox_limit} (so that $\lim\limits_{t\in \cal I}\bm\zeta^t_i=\A_ix^*$ and $\{\bm \zeta^t_i\}_{t\in \cal I}\subseteq {\rm dom}\,P_i$), the continuity of $P_i$ in its domain and \eqref{robust}. These together with the facts $\bm 0\in\partial^{\infty}P_0(\bm x^*)$, $\bm 0 \in\partial^{\infty}P_i(\A_i\bm x^*)$ ($i=1,...,m$)\footnote{These follow from (i) and \cite[Corollary~8.10]{RockWets98}.} and \eqref{psi} contradict \eqref{CQ}. Consequently, $\{\nu_i^t\}_{t\in\cal I}$ is bounded for all $i=1,\ldots,m$. Then, without loss of generality, we assume that $\lim\limits_{t\in \cal I}\frac{\A_i\bm x^t - \bm \zeta_i^t}{\lambda_{i,t-1}}$ exists for all $i=1,...,m$. Therefore, for each $i=1,\ldots,m$, we obtain from \eqref{eta} that
    \begin{equation}\label{haha2}
    \tilde {\bm \chi}^*_i = \A_i^*\lim\limits_{t\in \cal I}\frac{\A_i\bm x^t - \bm \zeta_i^t}{\lambda_{i,t-1}} \in \A_i^*\left\{\lim_{t\in \cal I}\bm u^t:\; \bm u^t \in \partial P_i(\bm \zeta_i^t) \mbox{ for each $t$} \right\} \subseteq \A_i^*\partial P_i(\A_i\bm x^*),
    \end{equation}
    where the first inclusion follows from \eqref{key_inclusion} and the second inclusion follows from Lemma~\ref{prox_limit} (so that $\lim\limits_{t\in \cal I}\bm\zeta^t_i= \A_i\bm x^*$ and $\{\bm \zeta^t_i\}_{t\in \cal I}\subseteq {\rm dom}\,P_i$ for each $i=1,\ldots,m$), the continuity of $P_i$ in its domain and \eqref{robust}. Passing to the limit in \eqref{condition2} along $t\in \cal I$ and invoking \eqref{eta}, \eqref{haha1} and \eqref{haha2}, we see that
    \[
    0 = \nabla f(\bm x^*) + \tilde {\bm \eta}^* + \sum_{i=1}^m\tilde {\bm \chi}_i^*\in \nabla f(\bm x^*)+ \partial P_0(\bm x^*) + \sum_{i=1}^m \A_i^*\partial P_i(\A_i\bm x^*).
    \]
    This completes the proof.\qedwhite
\end{proof}

\begin{REMA}
\normalfont  If, instead of \eqref{condition}, one can guarantee that
  \[
  F_{\bm \lambda_t}(\bm x^{t,l_t}) \le \inf F_{\bm \lambda_t} + \epsilon_t,
  \]
  then one can show that any accumulation point of the sequence $\{\bm x^t\}$ generated by SDCAM is a global minimizer of \eqref{sparseprob}. To see this, recall from \cite[Theorem~1.25]{RockWets98} that $e_{\lambda_{i,t}}P_i(\A_i\bm x)\to P_i(\A_i\bm x)$ for each $i$ and all $\bm x$, and from the discussion on \cite[Page~244]{RockWets98} that $\{(e_{\lambda_{i,t}}P_i)\circ \A_i\}$ epiconverges to $P_i\circ \A_i$ for each $i$. Using these together with \cite[Theorem~7.46]{RockWets98}, we further see that $\{F_{\bm \lambda_t}\}$ epiconverges to $F$. Now, in view of \cite[Theorem~7.31(b)]{RockWets98}, we conclude that any accumulation point of the sequence $\{\bm x^t\}$ generated by SDCAM is a global minimizer of $F$.
\end{REMA}

\section{Applications to structured optimization problems}\label{sec:struct}

\subsection{Problems involving sparsity}\label{sec:cardinality}

Consider the following $\ell_0$-constrained optimization problem discussed in \cite{tono2017}:
\begin{align}
\begin{array}{cl}
    \underset{\bm{x}}{\mbox{minimize}} & f(\bm{x})\\
    \mbox{subject to} & \|\bm{x}\|_0\leq k,~\bm{x}\in C,
\end{array}
  \label{eq:l0const-1}
\end{align}
where $f$ is as in \eqref{sparseprob} and $C$ is a nonempty closed set.
This model includes many important application problems such as sparse principal component analysis,
sparse portfolio selection and sparse nonnegative linear regression as special cases.
These applications typically involve a closed set $C$ whose projection is easy to compute. For instance, we have $f(\bvec x)=-\bvec x^\top \bm V\bvec x,$ defined with a covariance matrix
$\bvec V \in {\cal S}^n$ and
$C = \{\bm x:\; \|\bm x\|=1\}$ for sparse principal component analysis ~\cite{thiao2010dc}.
As another example, for sparse nonnegative linear regression \cite{slawski2013non}, $f(\bvec x)=\frac{1}{2}\|\bvec A\bvec x-\bvec b\|^2$
defined with $\bvec A\in \mathbb R^{m\times n}$ and $\bvec b\in\mathbb R^m$,
and $C = \{\bm x:\; \bm x\ge 0\}$ are used. For these two examples, the direct projection onto $C\cap \{\bm x:\; \|\bm x\|_0\le k\}$ is easy to compute, and the proximal gradient algorithm can then be applied to solving \eqref{eq:l0const-1}.

We next discuss a specific example where the direct projection onto $C\cap \{\bm x:\; \|\bm x\|_0\le k\}$ might not be easy to compute, and describe how our SDCAM can be applied.

\begin{EXAMP}
  [Sparse portfolio problem]\label{examp:portfolio}

\normalfont Given a basket of investable assets,
the Markowitz model \cite{Markowitz52} seeks to find the optimal asset allocation of the portfolio by minimizing the estimated variance with an expected return above a specified level. More recently, \cite{Brodie2009} has added the $\ell_1$-norm to the classical Markowitz model to obtain sparse portfolios, and after that, various types of sparse regularizers such as $\ell_p$-norm $(0 < p < 1)$ are incorporated into the Markowitz model
(e.g., \cite{Chen13}).

The sparse portfolio selection problem we consider here takes the following form:
\begin{align}
\begin{array}{cl}
    \underset{\bm{x}}{\mbox{minimize}} & f(\bm{x}) :=\frac{1}{2} \bm{x}^\top \bm Q \bm{x} \\
    \mbox{subject to} & \|\bm{x}\|_0\leq k, ~\bm x \ge 0,\;  \bm \e^\top \bm x = 1, \;\bm r^\top \bm x = r_0,
\end{array}
  \label{eq:l0const}
\end{align}
where ${\bm Q} \in \SC^n$ is the estimated covariance matrix of the portfolio, $\bm r \in \R^n$ is the estimated mean return vector of investable assets, $r_0 \in \R$ is a specific return level, and $\bm e$ is the vector of all ones. % with a matching dimension.
The constraint $\bm x \ge 0$ is known as
the non-shortsale constraint, and model \eqref{eq:l0const} is the formulation of the shorting-prohibited
sparse Markowitz model. We assume here that the feasible set of \eqref{eq:l0const} is nonempty.

Notice that the feasible set of \eqref{eq:l0const} is compact and hence \eqref{eq:l0const} has a solution. Let $\bm x^*$ be a solution of \eqref{eq:l0const} and $\tau \gg\max\limits_i|x^*_i|$. Define $\Omega :=\{\bm x: \|\bm x\|_0\le k, 0\leq \bm x \leq \tau\}$ and $S:= \{\bm x : \bm \e^\top \bm x = 1, \bm r^\top \bm x = r_0 \}$. Then \eqref{eq:l0const} can be rewritten in the form of \eqref{sparseprob} (with the same optimal value) as follows
\begin{equation}
  \underset{\bm{x} }{\mbox{minimize}}  ~ \displaystyle f(\bm x) + \underbrace{\delta_{\Omega}(\bm x)}_{P_0(\bm x)} + \underbrace{\delta_{S}(\bm x)}_{P_1(\bm x)}, \label{spp}
\end{equation}
in which $f+P_0$ is level-bounded. Therefore, we can apply SDCAM in Section \ref{sec3} to \eqref{spp}, and in each subproblem of SDCAM we can use NPG$_{\rm major}$ to minimize $F_{\bm \lambda_t}$ as described in Theorem~\ref{thm0}. The method involves computing two projections $\bm\proj_{\Omega}$ and $\bm\proj_S$, which are easy to compute. Indeed, we have $\max\{\min\{\tilde  H_k(\bm y),\tau\},0\}\in \proj_{\Omega}(\bm y)$, where $\tilde H_k(\bm v)$ keeps any $k$ largest entries of $\bm v$ and sets the rest to zero.
\footnote{To see this, recall from \cite[Proposition~3.1]{LuZh13} that an element $\bm \zeta^*$ of $\proj_{\Omega}(\bm y)$ can be obtained as
\[
\zeta^*_i = \begin{cases}
  \tilde \zeta^*_i & {\rm if}\ i \in I^*,\\
  0 & {\rm otherwise},
\end{cases}
\]
where $\tilde \zeta^*_i = \argmin\{\frac12(\zeta_i - y_i)^2:\; 0\le \zeta_i\le \tau\} = \max\{\min\{y_i,\tau\},0\}$, and $I^*$ is an index set of size $k$ corresponding to the $k$ largest values of $\{\frac12 y_i^2 - \frac12(\tilde \zeta^*_i - y_i)^2\}_{i=1}^n = \{\frac12 y_i^2 - \frac12(\min\{\max\{y_i - \tau,0\},y_i\})^2\}_{i=1}^n$. Since the function $t\mapsto \frac12t^2 - \frac12(\min\{\max\{t - \tau,0\},t\})^2$ is nondecreasing, we can let $I^*$ correspond to any $k$ largest entries of $\bm y$.}
%Note that the computation of $\bm\proj_S$ is also easy.
\qedwhite
\end{EXAMP}

In statistics, $\ell_1$-norm regularizer has been used for inducing sparsity in variable selection problems; see Lasso
\cite{tibshirani1996}, which is an application of the $\ell_1$ penalty to linear regression.
A more general model of Lasso, the generalized Lasso \cite{TibshiraniTaylor2011}, has been proposed as
\begin{align*}
\begin{array}{cl}
    \underset{\bm{x} }{\mbox{minimize}} & \frac{1}{2}\|\bm A\bm x-\bm b\|^2 + c\|\bm D \bm x\|_1,
\end{array}
\end{align*}
where $\bm A\in \mathbb R^{m\times n}$ is a matrix of predictors, $\bm b\in\mathbb R^m$ is a response vector, $c \geq 0$ is a tuning parameter and
$\bm D\in\mathbb R^{d \times n}$ is a specified penalty matrix.
The term $\|\bm D \bm x\|_1$ can enforce certain structural sparsity on the coefficients in the solution.
For example, with an appropriate $\bm D$, $\|\bm D \bm x\|_1$ can express $\sum_{i=2}^n|x_i-x_{i-1}|$,
which penalizes the absolute differences in adjacent coordinates of $\bm x$. This specific $\bm D$ leads to the so-called fused Lasso. A variant of this type of regularizer (anisotropic total variation regularizer) is also used in image processing for minimizing the horizontal or/and vertical differences between pixels. Some other applications which require a non-identity matrix $\bm D$ in the generalized Lasso were discussed in \cite{TibshiraniTaylor2011}. In the next example, we discuss how our SDCAM can be applied to some nonconvex variants of the generalized Lasso problem.

\begin{EXAMP}
  [Nonconvex fused regularized problem]\label{examp:fusedreg}
  \normalfont Similarly as in \cite{ParakhSelesnick15}, we consider the following
  nonconvex fused regularized problem
    \begin{align}\label{nfrp}
\begin{array}{cl}
  \underset{\bm{x}}{\mbox{minimize}} & \frac{1}{2}\|\bm A\bm x-\bm b\|^2 + c_1\phi_1(\bm x)
+ c_2\phi_2(\bm D \bm x),
\end{array}
  \end{align}
  where $\bm A\in\R^{m\times n}$, $\bm b\in\R^m$, $\bm D \bm x = (x_2 - x_1,...,x_n - x_{n-1})^\top$, $c_1 > 0$ and $c_2 > 0$ are regularization parameters, $\phi_1(\bm x) = \sum_{i=1}^n \varphi_i(|x_i|)$ and $\phi_2$ are nonconvex sparsity-inducing regularizers with $\varphi_i: \R_+ \rightarrow \R_+$ being closed and nondecreasing, and $\phi_2: \R^{n-1}\rightarrow\R_+$ being closed and level-bounded.

  Note that \eqref{nfrp} can be rewritten in the form of
  \begin{align}\label{new_form}
\begin{array}{cl}
  \underset{\bm{x}}{\mbox{minimize}} & g(\bm {\tilde{A}}\bm x-\bm {\tilde{b}}) + \Psi(\bm x),
\end{array}
  \end{align}
  in which $\bm{\tilde{A}} = \begin{pmatrix}\bm A\\\bm D\end{pmatrix}$, $\bm{\tilde{b}} = \begin{pmatrix}\bm b\\\bm 0\end{pmatrix}$, $g(\bm y) = \frac12\|\bm y_1\|^2 + c_2\phi_2(\bm y_2)$ with $\bm y := (\bm y_1, \bm y_2) \in\R^m \times \R^{n-1}$, and $\Psi(\bm x) = c_1
\sum_{i=1}^n \varphi_i(|x_i|)$. It is routine to check that $g$ and $\Psi$ satisfy \cite[Assumption 2]{LuLi2015}. Hence, according to \cite[Theorem 2.1]{LuLi2015}, we know that \eqref{new_form}, and hence \eqref{nfrp}, has at least one solution.

Notice that we can directly apply the SDCAM in Section \ref{sec3} to \eqref{nfrp} when $\phi_1$ is level-bounded, e.g., $\phi_1(\bm x) = \|\bm x\|^p$: we set $f(\bm x) = \frac{1}{2}\|\bm A\bm x-\bm b\|^2$, $P_0 = c_1\phi_1$ and $P_1 = c_2\phi_2$ with $\A_1 = \bm D$ in this case. When the NPG$_{\rm major}$ is applied as described in Theorem~\ref{thm0} for solving the corresponding subproblems, it involves computing the proximal mappings $\prox_{\mu \phi_1}$ and $\prox_{\mu\phi_2}$ for $\mu > 0$. These are easy to compute for many well-known nonconvex sparse regularizers; see \cite{gong2013general}.

Finally, in the case when $\phi_1$ is not level-bounded, let $\bm x^*$ be a solution of \eqref{nfrp} and $\tau\gg\max\limits_i|x^*_i|$. We define $\Omega:=\{\bm x: \max\limits_i|x_i|\le\tau\}$ and rewrite \eqref{nfrp} in the form of \eqref{sparseprob} (with the same optimal value) as follows
    \begin{equation}
  \underset{\bm{x} }{\mbox{minimize}}  ~ \underbrace{\displaystyle \frac{1}{2}\|\bm A\bm x-\bm b\|^2}_{f(\bm x)} + \underbrace{c_1
\sum_{i=1}^n \varphi_i(|x_i|) + \delta_{\Omega}(\bm x)}_{P_0(\bm x)} + \underbrace{c_2\phi_2(\bm D \bm x)}_{P_1(\A_1\bm x)}. \label{FR}
\end{equation}
Then $f+P_0$ is level-bounded and hence the SDCAM in Section \ref{sec3} can be applied. When the NPG$_{\rm major}$ is applied in the subproblem of SDCAM as described in Theorem~\ref{thm0}, it involves computing the proximal mappings $\prox_{\mu P_0}$ and $\prox_{\mu\phi_2}$ for $\mu > 0$. Note that $\prox_{\mu P_0}$ can be obtained from $\prox_{\mu \psi_i}$ with $\psi_i(x_i) := c_1\varphi_i(|x_i|) + \delta_{|\cdot|\le\tau}(x_i)$, $i=1,...,n$, which can be efficiently computed for various nonconvex sparse regularizers such as SCAD, MCP, $\ell_p$ penalty and Capped-$\ell_1$ (see \cite{gong2013general}). Finally, the computation of $\prox_{\mu\phi_2}$ is also easy for many of these regularizers.
\qedwhite
\end{EXAMP}

\subsection{Problems with rank constraints}

Our algorithm can also be applied to rank-constrained nonconvex nonsmooth matrix optimization problems. We discuss some concrete examples below.

For notational simplicity, from now on, we let
\[
\Xi_k := \{\bm X:\; \rank(\bm X)\le k\}
\]
for a given integer $k$. Note that if $P_1 = \delta_{\Xi_k}$, then
\[
e_{\lambda_1} P_1(\bm X)=\frac{1}{2 \lambda_1} {\rm dist}^2(\bm X,\Xi_k)=\frac{1}{2 \lambda_1} (\|\bm X\|_F^2 - |\!|\!|\bm{X}|\!|\!|_{k,2}^2),
\]
where $|\!|\!|\bm{X}|\!|\!|_{k,2}^2$ denotes the sum of squares of the $k$ largest singular values of $\bm X$.
The function $\bm X \mapsto \|\bm X\|_F^2 - |\!|\!|\bm{X}|\!|\!|_{k,2}^2$ is a ``rank-related" variant of the so-called $k$-sparsity functions \cite{pang2017} because the relation $\mathrm{rank}(\bm{X}) \leq k$ can be equivalently expressed as
$\|\bm{X}\|_F^2 -|\!|\!|\bm{X}|\!|\!|_{k,2}^2=0$. A variant of this function was used in \cite{tono2017} as a penalty function for inducing sparsity. It is interesting to note that this function falls out naturally from the Moreau envelope of the indicator function of $\Xi_k$.

\begin{EXAMP}[Matrix completion]\label{examp:mc}
\normalfont The problem of recovering a low-rank data matrix $\bm{M} \in \R^{m \times n}$ from a sampling of its entries
is known as the matrix completion problem \cite{candes2009exact}. This problem can be formulated as
\begin{align*}
\begin{array}{cl}
\underset{\bm{X}}{\mbox{minimize}} & ~ \displaystyle \mathrm{rank}(\bm{X}) \\
\mbox{subject to}
& P_\Omega(\bm X) = P_\Omega(\bm M),
\end{array}
\end{align*}
where $\Omega$ is the index set of known entries of $\bm{M}$, and $P_\Omega$ is the sampling map defined as
\[
[P_\Omega(\bm Y)]_{ij} = \begin{cases}
  Y_{ij} & {\rm if} \ (i,j)\in \Omega,\\
  0 & {\rm otherwise}.
\end{cases}
\]
When the entries of the data matrix are noisy, one can consider the following variants of the above model:
\begin{align*}
\begin{array}{cl}
  \underset{\bm{X}}{\mbox{minimize}} &  \displaystyle \|P_\Omega(\bm X) - P_\Omega(\bm M)\|_F^2 \\
  \mbox{subject to} & \ \mathrm{rank}(\bm{X})\le k,
\end{array}\ {\rm or}\
\begin{array}{cl}
  \underset{\bm{X}}{\mbox{minimize}} &  \displaystyle  \|P_\Omega(\bm X) - P_\Omega(\bm M)\|_F^2 + \mu\, \mathrm{rank}(\bm{X}),
\end{array}
\end{align*}
where $\mu > 0$ is tuning parameter, and $k$ is a positive integer. Since these problems are nonconvex in general, some popular convex relaxation approaches have been proposed, where
the rank function is replaced by the nuclear norm function \cite{ReFaPa10}. The convex relaxations can be shown to be equivalent to the original nonconvex problems under suitable conditions \cite{candes2009exact}.

Here we consider the following variation of the matrix completion problem:
\begin{align}\label{MC1}
\begin{array}{cl}
\underset{\bm{X}}{\mbox{minimize}} & ~\displaystyle \frac12\|P_{\Omega}(\bm X) - P_{\Omega}(\bm M)\|_F^2 \\
\mbox{subject to}
& P_{\Theta}(\bm X) = P_{\Theta}(\bm M), ~\displaystyle \mathrm{rank}(\bm{X}) \le k,
\end{array}
\end{align}
where $\Omega$ is an index set corresponding to possibly {\em noisy} known entries of $\bm{M}$, and $\Theta$ is another index set corresponding to {\em noiseless} known entries of $\bm M$. Suppose that \eqref{MC1} has a solution $\bm X^*$, and take $\tau\gg \max\{\max\limits_{i,j}|X^*_{ij}|,\sigma_{\max}(\bm X^*)\}$.

Let $S:=\{\bm{X}:  P_{\Theta}(\bm X) = P_{\Theta}(\bm M)\}$, $\tilde S:=\{\bm{X}\in S: \max\limits_{i,j}|X_{ij}|\le \tau\}$ and $\tilde \Xi_k :=\{\bm X\in \Xi_k: \sigma_{\max}(\bm X)\le \tau\}$. Then \eqref{MC1} can be rewritten in the form of \eqref{sparseprob} (with the same optimal value) in the following two ways:
\begin{align}
\begin{array}{cl}
  \underset{\bm{X} }{\mbox{minimize}} & ~ \displaystyle \underbrace{\frac12\|P_{\Omega}(\bm X - \bm M)\|_F^2}_{f(\bm X)} + \underbrace{\delta_{S}(\bm X)}_{P_1(\bm X)} + \underbrace{\delta_{\tilde \Xi_k}(\bm X)}_{P_0(\bm X)}, \label{MC}
\end{array}\\
\begin{array}{cl}
  \underset{\bm{X} }{\mbox{minimize}} & ~ \displaystyle \underbrace{\frac12\|P_{\Omega}(\bm X - \bm M)\|_F^2}_{f(\bm X)} + \underbrace{\delta_{\tilde S}(\bm X)}_{P_0(\bm X)} + \underbrace{\delta_{\Xi_k}(\bm X)}_{P_1(\bm X)}. \label{MCMC}
\end{array}
\end{align}
Note that in both cases, $f+P_0$ is level-bounded and hence the SDCAM in Section~\ref{sec3} can be applied.

Suppose that SDCAM is applied to \eqref{MC}. Then when the NPG$_{\rm major}$ is applied as described in Theorem~\ref{thm0} for solving the subproblems, it requires computing $\proj_{S}$ and $\proj_{\tilde \Xi_k}$. Both of these are easy to compute. In particular, let $\bm U \Diag(\bm \sigma)\bm V^\top$ be a singular value decomposition of $\bm W$. Then an element $\bm Y \in \proj_{\tilde \Xi_k}(\bm W)$ can be computed as $\bm Y = \bm U \Diag(\bm \zeta^*)\bm V^\top$ with $\bm \zeta^* =\min\{H_k(\bm \sigma),\tau\bm e\}$, where $\bm e$ is the vector of all ones, the minimum is taken componentwise, and $H_k(\bm v)$ is the hard thresholding operator that keeps any $k$ largest entries of $\bm v$ in magnitude and sets the rest to zero.
\footnote{To see this, recall from \cite[Corollary~2.3]{LuZhLi15} and \cite[Proposition~3.1]{LuZh13} that an element $\bm Y \in \proj_{\tilde \Xi_k}(\bm W)$ can be computed as $\bm Y = \bm U \Diag(\bm \zeta^*)\bm V^\top$, where
\[
\zeta^*_i = \begin{cases}
  \tilde \zeta^*_i & {\rm if}\ i \in I^*,\\
  0 & {\rm otherwise},
\end{cases}
\]
where $\tilde \zeta^*_i = \argmin\{\frac12(\zeta_i - \sigma_i)^2:\; |\zeta_i|\le \tau\} = \min\{\sigma_i,\tau\}$, and $I^*$ is an index set of size $k$ corresponding to the $k$ largest values of $\{\frac12\sigma_i^2 - \frac12(\tilde \zeta^*_i - \sigma_i)^2\}_{i=1}^n = \{\frac12\sigma_i^2 - \frac12(\max\{0,\sigma_i - \tau\})^2\}_{i=1}^n$. Since $t\mapsto \frac12t^2 - \frac12(\max\{0,t - \tau\})^2$ is nondecreasing for nonnegative $t$, we can take $I^*$ to correspond to any $k$ largest singular values.}

On the other hand, when applying SDCAM to \eqref{MCMC} with the NPG$_{\rm major}$ as described in Theorem~\ref{thm0} applied to the subproblems, one needs to compute $\proj_{\tilde S}$ and $\proj_{\Xi_k}$. Again, both of these are easy to compute. In particular, let $\bm U \Diag(\bm \sigma) \bm V^\top$ be a singular value decomposition of $\bm W$. Then an element $\bm Y \in \proj_{\Xi_k}(\bm W)$ can be computed as $\bm Y = \bm U \Diag(H_k(\bm\sigma)) \bm V^\top$.
%Finally, to apply SDCAM, we also need to find a feasible point $\bm X^{\rm feas}$ for \eqref{MC1}. The difficulty of finding such a point depends on the structure of $\Theta$.
\qedwhite
\end{EXAMP}

\begin{EXAMP}
[Nearest low-rank correlation matrix]\label{examp:correlation matrix}
\normalfont Finding the nearest low-rank correlation matrix has important applications in finance; see \cite{BHR10,GaoSun2010}. The problem is often formulated as
\begin{align}\label{ncm}
\begin{array}{cl}
\underset{\bm{X}\in\SC^n}{\mbox{minimize}} & ~ \frac12\|\bm H\circ(\bm X - \bm M)\|_F^2 \\
\mbox{subject to}
& {\rm diag}(\bm X) = \bm e,\\
& \bm X \succeq 0,~\rank(\bm X) \le k,
\end{array}
\end{align}
where $\SC^n$ is the space of $n\times n$ symmetric matrices, $\bm H$ is a given nonnegative weight matrix, $\bm M$ is a given symmetric matrix and $\bm e$ is the vector of all ones, $k\ge 1$. In \cite{GaoSun2010}, the constraint $\rank(\bm X)\le k$ was rewritten equivalently as requiring the sum of the $n-k$ smallest eigenvalues equal zero. A penalty approach was then adopted to handle this latter equality constraint.

In the following, we describe how to solve \eqref{ncm} by the SDCAM in Section 3. Notice that for any $\bm{X}\in\SC^n$ satisfying ${\rm diag}(\bm X) = \bm e$ and $\bm X \succeq 0$, we have $\bm X \preceq n\,\bm I$. Thus, the feasible set of \eqref{ncm} is compact and hence \eqref{ncm} has a solution. Let $\bm X^*$ be a solution of \eqref{ncm} and $\tau \gg \max\{\max\limits_{i,j}|X^*_{ij}|,\lambda_{\max}(\bm X^*)\}$. Define
\begin{gather*}
  S:= \{\bm X\in \SC^n: {\rm diag}(\bm X) = \bm e\},~~ \tilde S:=\{\bm{X}\in S: \max\limits_{i,j}|X_{ij}|\le \tau\}, \\
  \Pi_k:=\{\bm X\succeq 0:\rank(\bm X)\le k\},~~ \tilde \Pi_k :=\{\bm X\in \Pi_k: \lambda_{\max}(\bm X)\le \tau\}.
\end{gather*}
Then \eqref{ncm} can be rewritten in the form of \eqref{sparseprob} (with the same optimal value) in the following two ways:
\begin{align}
\begin{array}{cl}
  \underset{\bm{X}\in\SC^n }{\mbox{minimize}} & ~ \displaystyle \underbrace{\frac12\|\bm H\circ(\bm X - \bm M)\|_F^2}_{f(\bm X)} + \underbrace{\delta_{S}(\bm X)}_{P_1(\bm X)} + \underbrace{\delta_{\tilde \Pi_k}(\bm X)}_{P_0(\bm X)}, \label{NCM_1}
\end{array}\\
\begin{array}{cl}
  \underset{\bm{X}\in\SC^n }{\mbox{minimize}} & ~ \displaystyle \underbrace{\frac12\|\bm H\circ(\bm X - \bm M)\|_F^2}_{f(\bm X)} + \underbrace{\delta_{\tilde S}(\bm X)}_{P_0(\bm X)} + \underbrace{\delta_{\Pi_k}(\bm X)}_{P_1(\bm X)}. \label{NCM_2}
\end{array}
\end{align}
Notice that in both cases, $f+P_0$ is level-bounded and hence we can apply the SDCAM in Section 3.

We first look at \eqref{NCM_1}. When the NPG$_{\rm major}$ as described in Theorem~\ref{thm0} is applied to the subproblems, one has to compute $\proj_S$ and $\proj_{\tilde \Pi_k}$. Both projections can be easily computed. In particular, let $\bm U \Diag(\bm \lambda) \bm U^\top$ be an eigenvalue decomposition of $\bm W\in\SC^n$. Then an element $\bm Y \in \proj_{\tilde \Pi_k}(\bm W)$ can be computed as $\bm Y = \bm U \Diag(\bm \zeta^*)\bm V^\top$ with $\bm \zeta^* = \max\{\min\{\tilde  H_k(\bm \lambda),\tau\},0\}$, where $\tilde H_k(\bm v)$ keeps any $k$ largest entries of $\bm v$ and sets the rest to zero.
\footnote{To see this, recall from \cite[Proposition~2.8]{LuZhLi15} and \cite[Proposition~3.1]{LuZh13} that an element $\bm Y \in \proj_{\tilde \Pi_k}(\bm W)$ can be computed as $\bm Y = \bm U \Diag(\bm \zeta^*)\bm V^\top$, where
\[
\zeta^*_i = \begin{cases}
  \tilde \zeta^*_i & {\rm if}\ i \in I^*,\\
  0 & {\rm otherwise},
\end{cases}
\]
where $\tilde \zeta^*_i = \argmin\{\frac12(\zeta_i - \lambda_i)^2:\; 0\le \zeta_i\le \tau\} = \max\{\min\{\lambda_i,\tau\},0\}$, and $I^*$ is an index set of size $k$ corresponding to the $k$ largest values of $\{\frac12\lambda_i^2 - \frac12(\tilde \zeta^*_i - \lambda_i)^2\}_{i=1}^n = \{\frac12\lambda_i^2 - \frac12(\min\{\max\{\lambda_i - \tau,0\},\lambda_i\})^2\}_{i=1}^n$. Since the function $t\mapsto \frac12t^2 - \frac12(\min\{\max\{t - \tau,0\},t\})^2$ is nondecreasing, we can let $I^*$ correspond to any $k$ largest entries of $\bm \lambda$.
}

We next turn to \eqref{NCM_2}. In this case, in each NPG$_{\rm major}$ iteration, one has to compute $\proj_{\tilde S}$ and $\proj_{\Pi_k}$. Again, both projections can be easily computed. In particular, let $\bm U \Diag(\bm \lambda) \bm U^\top$ be an eigenvalue decomposition of $\bm W\in\SC^n$. Then an element $\bm Y \in \proj_{\Pi_k}(\bm W)$ can be computed as $\bm Y = \bm U \Diag(\max\{\tilde H_k(\bm\lambda),\bm 0\})\bm U^\top$.
%Finally, to apply SDCAM, we also need to specify a feasible point $\bm X^{\rm feas}$ for \eqref{ncm}. It is easy to see that the matrix of all ones is feasible.
\qedwhite
\end{EXAMP}

\begin{EXAMP}[Simultaneously sparse and low rank matrix optimization problem]\label{examp:lr_sparse}
\normalfont The following problem was considered in \cite{richard2012}:
\[
\underset{\bm{X}}{\mbox{minimize}}  ~~ f(\bm{X}) + \gamma \|\vect(\bm{X})\|_1 + \tau \|\bm{X}\|_*,
\]
where $f$ is as in \eqref{sparseprob}, $\gamma$ and $\tau$ are positive numbers.
This problem aims at finding solutions which are both sparse and low-rank, and can be applied to identifying clusters in social networks; see \cite[Section~6.2]{richard2012}. This model relaxes and penalizes the sparsity index $\|\vect(\bm{X})\|_0$ and the low-rank index $\rank(\bm X)$ by two convex functions $\|\vect(\bm{X})\|_1$ and $\|\bm{X}\|_*$, respectively.

Here, we consider the following variant that explicitly incorporates the sparsity and rank constraints:
\begin{align}\label{simul_SL}
\begin{array}{cl}
\underset{\bm{X}}{\mbox{minimize}} & ~\displaystyle f(\bm{X}) \\
\mbox{subject to}
& \|\vect(\bm{X})\|_0 \leq s, ~\displaystyle \rank(\bm{X}) \leq k.
\end{array}
\end{align}
Suppose that \eqref{simul_SL} has a solution $\bm X^*$, and let $\tau \gg \max\{\max\limits_{i,j}|X^*_{ij}|,\sigma_{\max}(\bm X^*)\}$. Define $S:=\{\bm{X}:  \|\vect(\bm{X})\|_0 \leq s\}$, $\tilde S:=\{\bm{X}\in S: \max\limits_{i,j}|X_{ij}|\le \tau\}$ and $\tilde \Xi_k :=\{\bm X\in \Xi_k: \sigma_{\max}(\bm X)\le \tau\}$. Then \eqref{simul_SL} can be rewritten in the form of \eqref{sparseprob} (with the same optimal value) in the following two ways:
\begin{align}
\begin{array}{cl}
  \underset{\bm{X} }{\mbox{minimize}} & ~ \displaystyle f(\bm X) + \underbrace{\delta_{S}(\bm X)}_{P_1(\bm X)} + \underbrace{\delta_{\tilde \Xi_k}(\bm X)}_{P_0(\bm X)}, \label{SL_1}
\end{array}\\
\begin{array}{cl}
  \underset{\bm{X} }{\mbox{minimize}} & ~ \displaystyle f(\bm X) + \underbrace{\delta_{\tilde S}(\bm X)}_{P_0(\bm X)} + \underbrace{\delta_{\Xi_k}(\bm X)}_{P_1(\bm X)}. \label{SL_2}
\end{array}
\end{align}
Note that in both cases, $f+P_0$ is level-bounded and hence the SDCAM in Section~\ref{sec3} can be applied. When the NPG$_{\rm major}$ as described in Theorem~\ref{thm0} is applied to the corresponding subproblems, one has to compute $\proj_S$ and $\proj_{\tilde \Xi_k}$ for \eqref{SL_1}, and $\proj_{\tilde S}$ and $\proj_{\Xi_k}$ for \eqref{SL_2}. All these projections can be computed efficiently; see Examples~\ref{examp:portfolio} and~\ref{examp:mc}.
%Finally, to apply SDCAM, we also need to find a feasible point $\bm X^{\rm feas}$ for \eqref{simul_SL}. We can set $\bm X^{\rm feas} = \bm 0$.
\qedwhite
\end{EXAMP}

\section{Numerical experiments}\label{sec5}

In this section, we apply our SDCAM in Section \ref{sec3} with subproblems solved by NPG$_{\rm major}$ as described in Theorem~\ref{thm0} to an instance of Example~\ref{examp:fusedreg} and Example~\ref{examp:lr_sparse}: the nonconvex fused regularized problem and the simultaneously sparse and low rank matrix optimization problem. All numerical experiments are performed in Matlab R2016a on a 64-bit PC with an Intel(R) Core(TM) i7-6700 CPU (3.41GHz)
and 32GB of RAM.

%\subsection{Nonconvex fused regularized problem}\label{sec51}
\subsection{Nonconvex fused regularized problem: comparison against a solution method based on smoothing}\label{sec51}
We consider the following special instance of nonconvex fused regularized problem:
\begin{align}\label{nfrp_numer}
  \begin{array}{cl}
  \underset{\bm{x}}{\mbox{minimize}} & \frac{1}{2}\|\bm x-\bm b\|^2 + c_1\|\bm x\|_1
+ c_2\|\bm D \bm x\|_p^p,
  \end{array}
\end{align}
where $c_1 > 0$, $c_2 > 0$, $p = 0.5$, $\bm D \bm x = (x_2 - x_1,...,x_n - x_{n-1})^\top$, and $\bm b\in \mathbb R^n$ is the noisy measurement of a piecewise constant sparse signal.
Notice that the function $\|\cdot\|_1$ is level-bounded. We can directly apply SDCAM as described in Example \ref{examp:fusedreg} and solve the subproblems by NPG$_{\rm major}$. On the other hand, a commonly used technique for handling optimization problems involving $\ell_p$ penalty functions ($0<p<1$) is smoothing. Thus, in our experiments below, we compare SDCAM with a method based on smoothing, the smoothing nonmonotone proximal gradient method (sNPG), for solving \eqref{nfrp_numer}. In sNPG, we solve the following sequence of subproblems approximately by NPG (this is NPG$_{\rm major}$ applied to \eqref{P1} when $g = 0$):
\begin{equation*}
  \underset{\bm{x}}{\mbox{minimize}}  ~ \underbrace{\displaystyle \frac{1}{2}\|\bm x-\bm b\|^2 + c_2\sum_{i=1}^{n-1}\left((\bm D\bm x)_i^2 + \lambda_t^2\right)^{\frac{p}{2}}}_{f_t(\bm x)} + \underbrace{c_1\|\bm x\|_1}_{Q(\bm x)},
\end{equation*}
where $\lambda_t\downarrow0$ is the smoothing parameter. The approximate stationary point of $f_t+Q$ obtained is then used as initialization for minimizing $f_{t+1} + Q$.

\paragraph{\bf Data generation:} We first randomly generate a piecewise constant signal $\bm x\in\R^n$ using the following Matlab code:
\begin{verbatim}
J = randperm(10);I = sort(J(1:6),'ascend');x = zeros(n,1);
for i = 1:r
    if randn > 0
        x(n*I(i)/10 - 3*n/50 - randi(3) : n*I(i)/10) = randi(3);
    else
        x(n*I(i)/10 - 3*n/50 - randi(3) : n*I(i)/10) = -randi(3);
    end
end
\end{verbatim}
Then we let $\bm b = \bm{x} + \sigma\bm \xi$, where $\sigma > 0$ is a noise factor  and $\bm \xi$ has i.i.d. standard Gaussian entries. In our experiments, motivated by \cite{ParakhSelesnick15}, we choose $c_1 = c_2 = \sigma\sqrt{n}/40$. We shall see that this choice leads to reasonable recovery results in Figure~\ref{Recover}. We also set $\sigma = 0.1$, $n = 2000$, $4000$, $6000$, $8000$, $10000$.

\paragraph{\bf Parameter setting:} In SDCAM, we set $\lambda_t = 1/10^{t+1}$ and $\bm x^{\rm feas}$ to be the vector of all ones. In the NPG$_{\rm major}$ for solving the subproblems, we set $M = 4$, $L_{\max} = 10^8$, $L_{\min} = 10^{-8}$, $\tau = 2$, $c = 10^{-4}$, $L^0_{t,0}=1$ and for $l\ge 1$,
\[
L_{t,l}^0 = \max\left\{\min\left\{\frac{{\bm s^l}^\top \bm y^l}{\|\bm s^l\|^2},L_{\max}\right\},L_{\min}\right\},
\]
(which is the inverse of the so-called Barzilai-Borwein stepsize) where $\bm s^l = \bm x^{t,l}- \bm x^{t,l-1}$ and $\bm y^l = \nabla h(\bm x^{t,l}) - \nabla h(\bm x^{t,l-1})$. We initialize NPG$_{\rm major}$ at $\bm x^{\rm feas}$ and terminate it when the maximum number of iterations exceeds $10000$ or
\begin{equation*}
  \frac{\|\bm x^{t,l} - \bm x^{t,l-1}\|}{\max(\|\bm x^{t,l}\|,1)} < \epsilon_t/\bar L_{t,l-1}\ {\rm or}\ \frac{|F_{\bm \lambda_t}(\bm x^{t,l}) - F_{\bm \lambda_t}(\bm x^{t,l-1})|}{\max\{1,|F_{\bm \lambda_t}(\bm x^{t,l})|\}} < 10^{-12},
\end{equation*}
where $\epsilon_0 = 10^{-5}$ and $\epsilon_t = \max\{\epsilon_{t-1}/1.5, 10^{-6}\}$. On the other hand, in sNPG, we also let $\lambda_t = 1/10^{t+1}$ and solve the subproblems using NPG (i.e., NPG$_{\rm major}$ applied to \eqref{P1} with $g = 0$) with the same setting as described above, except that the $F_{\bm \lambda_t}$ above is replaced by $f_t+Q$ and for $l\ge 1$,
\[
L_{t,l}^0 = \begin{cases}
  \max\left\{\min\left\{\frac{{\bm s^l}^\top \bm y^l}{\|\bm s^l\|^2},L_{\max}\right\},L_{\min}\right\} & {\rm if}\ {\bm s^l}^\top \bm y^l > 10^{-12},\\
  \max\left\{\min\left\{\bar L_{t,l-1}/2,L_{\max}\right\},L_{\min}\right\} & {\rm otherwise}.
\end{cases}
\]
Finally, we terminate SDCAM when $\lambda_t < 10^{-9}$. And for a fair comparison, we consider two different termination criteria for sNPG: $\lambda_t < 10^{-7}$ (sNPG$_{-7}$) and $\lambda_t < 10^{-8}$ (sNPG$_{-8}$).

\paragraph{\bf Numerical results:} In Table~\ref{fused_tab}, we compare SDCAM, sNPG$_{-7}$ and sNPG$_{-8}$ in terms of the number of iterations (iter), CPU time (CPU) and the terminating function values (fval), averaged over 10 randomly generated instances. One can see that the terminating function values are comparable, and SDCAM is in general faster than sNPG$_{-8}$ and slower than sNPG$_{-7}$. Moreover, SDCAM outperforms the sNPG's slightly in terms of function values when the dimension is relatively low ($\le 4000$). To illustrate the ability to recover the original signal, we also plot the original signal, the noisy measurement $\bm b$ and the signals recovered by SDCAM and sNPG$_{-8}$ for a random instance with $n = 2000$ in Figure~\ref{Recover}.

\begin{table}[h]
\caption{Results for SDCAM, sNPG$_{-7}$ and sNPG$_{-8}$ for solving \eqref{nfrp_numer}.}
\label{fused_tab}
\centering
%\hspace{-0.8 cm}
\begin{tabular}{|c|c|c|c|c|c|c|c|c|c|}
\hline
\multirow{2}{*}{$n$} & \multicolumn{3}{|c|}{iter} & \multicolumn{3}{|c|}{CPU}  & \multicolumn{3}{|c|}{fval} \\
\cline{2-10}
& SDCAM &  sNPG$_{-7}$&  sNPG$_{-8}$ & SDCAM &  sNPG$_{-7}$&  sNPG$_{-8}$ & SDCAM &  sNPG$_{-7}$&  sNPG$_{-8}$ \\\hline
    2000 &   27796 &   18498 &   23700 &   5.8 &   5.2 &   8.5 & 1.77278e+02 & 1.77294e+02 & 1.77290e+02\\
    4000 &   41686 &   33465 &   43465 &  17.0 &  16.5 &  29.1 & 4.95918e+02 & 4.95929e+02 & 4.95923e+02\\
    6000 &   45573 &   34113 &   44113 &  25.6 &  22.2 &  38.2 & 8.49430e+02 & 8.49420e+02 & 8.49398e+02\\
    8000 &   49089 &   28984 &   38984 &  34.7 &  23.7 &  43.5 & 1.32155e+03 & 1.32160e+03 & 1.32153e+03\\
   10000 &   45320 &   37379 &   47379 &  45.5 &  38.1 &  63.1 & 1.65874e+03 & 1.65870e+03 & 1.65864e+03\\ \hline
\end{tabular}
\end{table}

%\vspace{-1.3 cm}
%\begin{figure}[h]
%\begin{center}
%  \caption{Recovery comparison for noisy signal.}\label{Recover}
%  \includegraphics[scale = 0.4]{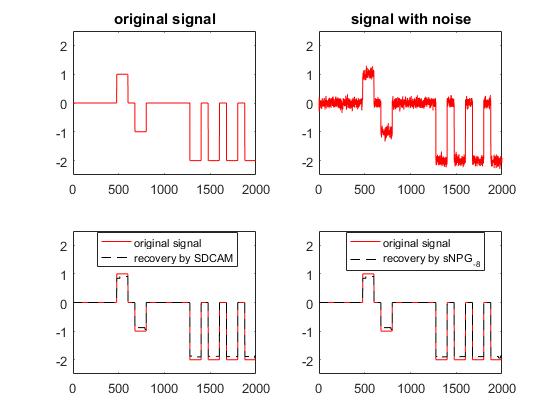}
%\end{center}
%\end{figure}

\begin{figure}[!htb]
    \centering
    \begin{minipage}{.49\textwidth}
        \centering
        \includegraphics[width=1\linewidth, height=0.3\textheight]{Recover.jpg}
        \caption{Recovery comparison for \\noisy signal.}
        \label{Recover}
    \end{minipage}%
    \begin{minipage}{0.49\textwidth}
        \centering
        \includegraphics[width=1\linewidth, height=0.3\textheight]{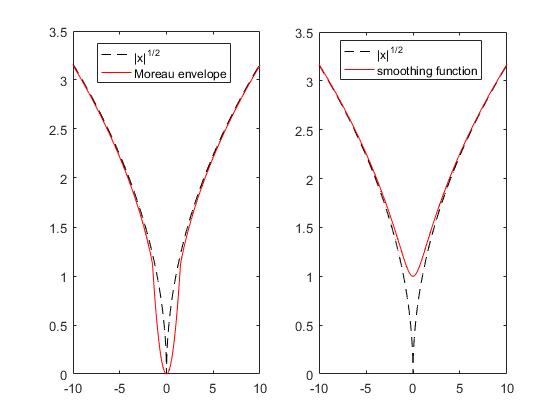}
        \caption{$|x|^{1/2}$ with its Moreau envelope and smoothing function.}
        \label{Envelope}
    \end{minipage}
\end{figure}

%\vspace{-0.5 cm}

To illustrate intuitively the approximation used in our SDCAM and sNPG, we plot the function $f(x) = |x|^{1/2}$ (in dashed lines), its Moreau envelope and its smoothing function in Figure~\ref{Envelope}. One can see that the envelope smooths the original nonsmooth point by a quadratic function. It is a lower approximation of $f$, while the smoothing function is an upper approximation of $f$.
%It can be seen that the lower approximation of $f$ is easier to reach the minimum solution than the approximation from the above for the minimization problem.
%\begin{figure}[h]
%\begin{center}
%  \caption{$|x|^{1/2}$ with its Moreau envelope and smoothing function.}\label{Envelope}
%  \includegraphics[scale = 0.20]{sm.jpg}
%\end{center}
%\end{figure}

%\subsection{Simultaneously sparse and low rank matrix optimization problem}
\subsection{Simultaneously sparse and low rank matrix optimization problem: which constraint should be modeled by $P_1$?}
We consider the following special instance of simultaneously sparse and low rank matrix optimization problem:
\begin{align}\label{rank_sparse}
\begin{array}{cl}
\underset{\bm{X}}{\mbox{minimize}} & ~\displaystyle \frac12 \|\bm{X} - \bm{M}\|_F^2 \\
\mbox{subject to}
& \|\vect(\bm{X})\|_0 \leq s, ~\displaystyle \rank(\bm{X}) \leq k,
\end{array}
\end{align}
where $\bm{M}\in\R^{m\times n}$ is a given noisy matrix, $s$ and $k$ are positive integers. Note that $f(\bm{X}): = \frac12 \|\bm{X} - \bm{M}\|_F^2$ is level-bounded. Therefore, \eqref{rank_sparse} has at least one solution. Then, as discussed in Example~\ref{examp:lr_sparse}, we can apply SDCAM to solving \eqref{rank_sparse} in two different ways by considering, respectively, the two formulations in \eqref{SL_1} and \eqref{SL_2}:\footnote{We would like to point out that we are indeed using $\Xi_k$ in place of $\tilde\Xi_k$ in \eqref{SL_1} and using $S$ in place of $\tilde S$ in \eqref{SL_2} in our experiments below. Notice that {\bf A3} is still satisfied because $f$ is level-bounded.}
the indicator function $\delta_{|\cdot|_0\le s}(\cdot)$ is approximated by the Moreau envelope
in \eqref{SL_1} and the function $\delta_{\rank(\cdot)\le k}(\cdot)$ is approximated by
its Moreau envelope in \eqref{SL_2}.
We call the method based on \eqref{SL_1} SDCAM$_r$ and the method based on \eqref{SL_2} SDCAM$_s$. In the following experiments, we compare these two methods.

\paragraph{\bf Data generation:} We first randomly generate $\bm{M}_1\in\R^{m\times k}$ and $\bm{M}_2\in\R^{k\times n}$ to have i.i.d. standard Gaussian entries. Then we set $m/10$ random rows of $\bm{M}_1$ to zero and let $\bm{M} = \bm{M}_1\bm{M}_2 + \sigma\bm\Delta$, where $\sigma > 0$ is a noise factor  and $\bm \Delta$ has i.i.d. standard Gaussian entries. We fix $n = 500$, $k = 10$ and $s = mn/10$, and we experiment with $\sigma = 0.005, 0.01,0.02$ and $m = 1000,2000,3000$ below.

\paragraph{\bf Parameter setting:} In both SDCAM$_r$ and SDCAM$_s$, we set $\lambda_t = 1/10^{t+1}$ and $\bm X^{\rm feas} = \bm 0$. In the NPG$_{\rm major}$ for solving the subproblems, we use the same parameter setting as in Section~\ref{sec51}.
%In the NPG$_{\rm major}$ for solving the subproblems, we set $M = 4$, $L_{\max} = 10^8$, $L_{\min} = 10^{-8}$, $\tau = 2$, $c = 10^{-4}$ and
%\[
%L_{t,l}^0 = \max\left\{\min\left\{\frac{{\rm tr}({\bm S^l}^\top \bm Y^l)}{\|\bm S^l\|_F^2},L_{\max}\right\},L_{\min}\right\},
%\]
%(which is the inverse of the so-called Barzalai-Borwein stepsize) where $\bm S^l = \bm X^{t,l}- \bm X^{t,l-1}$ and $\bm Y^l = \nabla h(\bm X^{t,l}) - \nabla h(\bm X^{t,l-1})$. We terminate NPG$_{\rm major}$ when the maximum number of iterations exceeds $10000$ or
%\begin{equation*}
%  \frac{\|\bm X^{t,l_t} - \bm X^{t,l_{t}-1}\|_F}{\max(\|\bm X^{t,l_t}\|_F,1)} < \epsilon_t/\bar L_{t,l_t-1} \ {\rm or}\ \frac{|F_{\bm \lambda_t}(\bm X^{t,l_t}) - F_{\bm \lambda_t}(\bm X^{t,l_t-1})|}{\max\{1,|F_{\bm \lambda_t}(\bm X^{t,l_t})|\}} < 10^{-12},
%\end{equation*}
%where $\epsilon_0 = 10^{-5}$ and $\epsilon_t = \max\{\epsilon_{t-1}/1.5, 10^{-6}\}$.
We initialize both algorithms at $\bm X^{\rm feas}$ and terminate them when
\begin{equation*}
  {\rm dist}(\bm X^t, S) \le 10^{-6}\cdot\|\bm X^t\|_F~~ {\rm and}~~\ {\rm dist}(\bm X^t, \Xi_k) \le 10^{-6}\cdot\|\bm X^t\|_F,
\end{equation*}
respectively.

\paragraph{\bf Numerical results:} In Table \ref{lowrank_sparse}, we compare SDCAM$_r$ and SDCAM$_s$ in terms of the number of iterations (iter), CPU time (CPU) and the feasibility violation (vio) (i.e., ${\rm dist}(\bm X^t, S)$ and ${\rm dist}(\bm X^t, \Xi_k)$, respectively) at termination, averaged over 10 randomly generated instances. One can see that SDCAM$_r$ takes fewer iterations and less time.
An intuitive explanation could be that the rank constraint is a more complicated constraint than the sparsity constraint to approximate via ``subgradients". Thus, the algorithm SDCAM$_r$ that maintains all its iterates in the rank constraint and then attempts to approximately satisfy the sparsity constraint as the algorithm progresses ends up converging more quickly.

%ively linearizes the concave part $-D_{\lambda_1,P_1}$ of
%the Moreau envelope \eqref{moreauEnv} of $P_1=\delta_{|\cdot|_0\le s}$ in SDCAM$_r$, or $P_1=\delta_{\rank(\cdot)\le k}$ in SDCAM$_s$. %\eqref{SL_2}.
%It is conceivable that the tightness of the linearization at NPG$_{\rm major}$ might lead to the difference in the number of iterations of SDCAM.

\begin{table}[h]
\caption{Comparison of SDCAM$_r$ and SDCAM$_s$ for solving \eqref{rank_sparse}.}
\label{lowrank_sparse}
\centering
%\hspace{-0.55 cm}
\begin{tabular}{|c|c|c|c|c|c|c|c|}
\hline
\multirow{2}{*}{$\sigma$} & \multirow{2}{*}{$m$} & \multicolumn{2}{|c|}{iter} & \multicolumn{2 }{|c|}{CPU}  & \multicolumn{2}{|c|}{vio}\\
\cline{3-8}
&  & SDCAM$_r$ & SDCAM$_s$  & SDCAM$_r$ &  SDCAM$_s$&  SDCAM$_r$ & SDCAM$_s$  \\\hline
       & 1000 &    41 &  5597 &   4.7 & 378.1 & 4.7569e-04 & 1.0515e-04\\
 0.005 & 2000 &    12 &  5298 &   4.0 & 647.0 & 6.7084e-04 & 1.5247e-04\\
       & 3000 &    12 &  4618 &   6.0 & 862.8 & 8.2038e-04 & 1.8857e-04\\ \hline
       & 1000 &  4508 &  7900 & 379.3 & 529.2 & 9.4347e-05 & 2.1032e-04\\
 0.010 & 2000 &  4453 &  7526 & 653.6 & 912.6 & 1.3412e-04 & 3.0580e-04\\
       & 3000 &  4428 &  5721 & 969.5 & 1080.6 & 1.6434e-04 & 3.7701e-04\\ \hline
       & 1000 &  4922 & 11631 & 413.7 & 769.2 & 1.8985e-04 & 4.2222e-04\\
 0.020 & 2000 &  4634 & 10267 & 675.5 & 1251.3 & 2.6849e-04 & 6.1136e-04\\
       & 3000 &  4580 & 10859 & 1003.5 & 2043.0 & 3.2804e-04 & 7.5510e-04\\ \hline
\end{tabular}
\end{table}

\section{Conclusions}\label{sec6}
In this paper, we propose a successive difference-of-convex approximation method for solving \eqref{sparseprob}. The key idea of this method is to approximate the nonsmooth functions in the objective of \eqref{sparseprob} by their Moreau envelopes. The approximation function can then be minimized by various proximal gradient methods with majorization techniques such as NPG$_{\rm major}$ in the appendix,
thanks to \eqref{moreauEnv}. We prove that the sequence generated by our method is bounded and any accumulation point is a stationary point of \eqref{sparseprob} under suitable conditions. We also discuss how to apply our method to concrete applications and conduct numerical experiments to illustrate its efficiency.

\appendix

\section{Convergence of an NPG method with majorization}
In this appendix, we consider the following optimization problem:
\begin{equation}\label{P1}
  \begin{array}{rl}
    \min\limits_{\bm x} & F(\bm x) = h(\bm x) + P(\bm x) - g(\bm x),
  \end{array}
\end{equation}
where $h$ is an $L_h$-smooth function, $P$ is a proper closed function with $\inf P > -\infty$ and $g$ is a continuous convex function. We assume in addition that there exists $\bm x^0\in {\rm dom}\,P$ so that $F$ is continuous in $\Omega(\bm x^0):= \{\bm x:\; F(\bm x)\le F(\bm x^0)\}$ and the set $\Omega(\bm x^0)$ is compact. As a consequence, it holds that $\inf F > -\infty$.

In Algorithm \ref{alg_npg} below, we describe an algorithm, the nonmonotone proximal gradient method with majorization (NPG$_{\rm major}$), for solving \eqref{P1}. We first show that the line-search criterion is well-defined.

\begin{algorithm}
\caption{The NPG$_{\rm major}$ for \eqref{P1}}\label{alg_npg}
\begin{description}
\item[\bf Step 0.] Choose $\bm x^0\in {\rm dom}\,P$ so that $\Omega(\bm x^0)$ is compact and $F$ is continuous in it. Pick $L_{\max}\ge L_{\min} >0$, $\tau>1$, $c>0$ and
an integer $M \ge 0$ arbitrarily. Set $t = 0$.

\item[\bf Step 1.]
Choose any $L^0_t \in [L_{\min},L_{\max}]$ and set $L_t = L^0_t$.
\begin{enumerate}
\item[\bf 1a)] Pick any $\bm \zeta^t\in \partial g(\bm x^t)$. Solve the subproblem
\begin{equation} \label{prox-subprob}
\bm u \in \Argmin_{\bm x} \left\{(\nabla h(\bm x^t) - \bm \zeta^t)^\top (\bm x-\bm x^t) +
\frac{L_t}{2}\|\bm x-\bm x^t\|^2 + P(\bm x)\right\}.
\end{equation}
\item[\bf 1b)] If
\begin{equation} \label{descent}
F(\bm u) \le \max_{[t-M]_+ \le i \le t} F(\bm x^i) - \frac{c}{2} \|\bm u-\bm x^t\|^2
\end{equation}
is satisfied, then go to {\bf step 2)}.
\item[\bf 1c)] Set $L_t \leftarrow \tau L_t$ and go to \bf step 1a).
\end{enumerate}

\item[\bf Step 2.] If a termination criterion is not met, set $\bar L_t = L_t$, $\bm x^{t+1}=\bm  u$, $t = t+1$. Go to \bf Step~1.
\end{description}
\end{algorithm}

\begin{PROP}\label{prop:bd}
  For each $t$, the condition \eqref{descent} is satisfied after at most
  \[
  \tilde n := \max\left\{\left\lceil \frac{\log(L_h + c) - \log(L_{\min})}{\log\tau}\right\rceil,1\right\}
  \]
  inner iterations, which is independent of $t$. Consequently, $\{\bar L_t\}$ is bounded.
\end{PROP}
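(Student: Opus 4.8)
The plan is to show that the backtracking in \textbf{Step 1} must accept $\bm u$ as soon as the trial value $L_t$ crosses the threshold $L_h+c$, and then to bound geometrically the number of times $L_t$ is enlarged before this happens. First I would record the one-step majorization estimate. Fix $t$, let $\bm\zeta^t\in\partial g(\bm x^t)$ be the subgradient used, and let $\bm u$ solve \eqref{prox-subprob} for the current trial $L_t$ (the subproblem attains its minimum since $P$ is proper closed with $\inf P>-\infty$, so the strongly convex quadratic makes the objective level-bounded). Comparing the subproblem objective at $\bm u$ with its value $P(\bm x^t)$ at $\bm x^t$ gives
\[
(\nabla h(\bm x^t) - \bm\zeta^t)^\top(\bm u-\bm x^t) + \frac{L_t}{2}\|\bm u-\bm x^t\|^2 + P(\bm u)\le P(\bm x^t).
\]
I would then add to this the descent inequality for the $L_h$-smooth $h$, namely $h(\bm u)\le h(\bm x^t)+\nabla h(\bm x^t)^\top(\bm u-\bm x^t)+\frac{L_h}{2}\|\bm u-\bm x^t\|^2$, and the subgradient inequality for the convex $g$, namely $-g(\bm u)\le -g(\bm x^t)-\bm\zeta^t{}^\top(\bm u-\bm x^t)$. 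The coefficients of $(\bm u-\bm x^t)$ cancel exactly (this is precisely where linearizing $-g$ through $\bm\zeta^t$ while retaining the proximal quadratic pays off), and the constant terms collapse to $F(\bm x^t)$, leaving
\[
F(\bm u)\le F(\bm x^t) + \frac{L_h-L_t}{2}\|\bm u-\bm x^t\|^2.
\]

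Next I would read off the sufficient condition for \eqref{descent}. Since $F(\bm x^t)\le \max_{[t-M]_+\le i\le t}F(\bm x^i)$, the estimate above shows that \eqref{descent} holds whenever $\frac{L_h-L_t}{2}\|\bm u-\bm x^t\|^2\le -\frac{c}{2}\|\bm u-\bm x^t\|^2$, i.e.\ whenever $L_t\ge L_h+c$. The line search begins at some $L^0_t\in[L_{\min},L_{\max}]$ and multiplies the trial by $\tau>1$ at each failure, so after $k$ enlargements the trial is at least $\tau^k L_{\min}$; this is $\ge L_h+c$ once $\tau^k L_{\min}\ge L_h+c$, that is once $k\ge \frac{\log(L_h+c)-\log L_{\min}}{\log\tau}$. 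Hence $L_t$ is enlarged at most $\tilde n$ times before \eqref{descent} is met, the $\max\{\cdot,1\}$ accounting for the degenerate case $L_h+c\le L_{\min}$ in which the very first trial already succeeds; this bound is clearly independent of $t$.

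Finally, for the boundedness of $\{\bar L_t\}$ I would argue directly from the accepted value. If \eqref{descent} holds at the first trial, then $\bar L_t=L^0_t\le L_{\max}$. Otherwise the accepted value is $\tau$ times the last rejected value $\hat L$; but by the contrapositive of the sufficient condition just derived, a rejection can only occur while $\hat L<L_h+c$, so $\bar L_t=\tau\hat L<\tau(L_h+c)$. In both cases $\bar L_t\le\max\{L_{\max},\,\tau(L_h+c)\}$, a constant not depending on $t$, which proves that $\{\bar L_t\}$ is bounded.

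I do not expect a genuine obstacle here: the content is the clean algebraic cancellation of the linear terms in the first display, which reduces the whole line search to the scalar threshold $L_t\ge L_h+c$. The only care needed is bookkeeping — solving the subproblem at the same $L_t$, $\bm u$ and $\bm\zeta^t$ used in all three inequalities, and tracking the enlargement count against the stated logarithmic bound.
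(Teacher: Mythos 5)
Your proposal is correct and follows essentially the same route as the paper: the same one-step estimate obtained by combining the $L_h$-descent lemma for $h$, the subgradient inequality for $g$, and the minimizing property of $\bm u$ in \eqref{prox-subprob}, yielding the threshold $L_t\ge L_h+c$ and the same geometric count of enlargements. The only (immaterial) difference is your slightly sharper final bound $\bar L_t\le\max\{L_{\max},\tau(L_h+c)\}$ versus the paper's $\bar L_t\le\tau^{\tilde n}L_{\max}$.
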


\begin{proof}
  For each $t$ and $L > 0$, let $\bm u^t_L$ be an arbitrarily fixed element in
  \[
  \Argmin_{\bm x} \left\{(\nabla h(\bm x^t) - \bm \zeta^t)^\top (\bm x-\bm x^t) +
\frac{L}{2}\|\bm x-\bm x^t\|^2 + P(\bm x)\right\}.
  \]
  Then we have
  \[
  \begin{split}
    F(\bm u^t_L)& \le h(\bm x^t) + \nabla h(\bm x^t)^\top(\bm u^t_L - \bm x^t) + \frac{L_h}{2}\|\bm u^t_L - \bm x^t\|^2 + P(\bm u^t_L) - g(\bm x^t) - {\bm \zeta^t}^\top(\bm u^t_L - \bm x^t)\\
    & = h(\bm x^t) - g(\bm x^t) + (\nabla h(\bm x^t)- {\bm \zeta^t})^\top(\bm u^t_L - \bm x^t) + \frac{L_h}{2}\|\bm u^t_L - \bm x^t\|^2 + P(\bm u^t_L)\\
    & \le F(\bm x^t) + \frac{L_h-L}{2}\|\bm u^t_L - \bm x^t\|^2,
  \end{split}
  \]
  where the first inequality holds because of the $L_h$-smoothness of $h$, the convexity of $g$ and the fact that $\bm \zeta^t\in \partial g(\bm x^t)$, and the last inequality follows from the definition of $\bm u^t_L$ as a minimizer. Thus, at the $t$-th iteration, the criterion \eqref{descent} is satisfied by $\bm u = \bm u^t_L$ whenever $L \ge L_h + c$. Since we have
  \[
  \tau^{\tilde n} L^0_t \ge \tau^{\tilde n} L_{\min} \ge L_h+c,
  \]
  we conclude that \eqref{descent} must be satisfied at or before the $\tilde n$-th inner iteration. Consequently, we have $\bar L_t \le \tau^{\tilde n}L_{\max}$ for all $t$.\qedwhite
\end{proof}

The convergence of NPG$_{\rm major}$ can now be proved similarly as in \cite[Lemma~4]{WrightNF09}.

\begin{PROP}\label{prop:conv}
  Let $\{\bm x^t\}$ be the sequence generated by NPG$_{\rm major}$. Then $\|\bm x^{t+1} - \bm x^t\|\to 0$.
\end{PROP}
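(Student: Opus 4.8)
The plan is to adapt the nonmonotone descent argument of Grippo--Lampariello--Lucidi, following \cite[Lemma~4]{WrightNF09}. First I would record the basic recursion: since $\bm x^{t+1}=\bm u$ and $\bar L_t=L_t$, the acceptance criterion \eqref{descent} reads
\[
F(\bm x^{t+1})\le \max_{[t-M]_+\le i\le t}F(\bm x^i)-\frac{c}{2}\|\bm x^{t+1}-\bm x^t\|^2.
\]
Writing $D_t:=\max_{[t-M]_+\le i\le t}F(\bm x^i)$, the bound $F(\bm x^{t+1})\le D_t$ together with the shifting window shows $D_{t+1}\le\max\{D_t,F(\bm x^{t+1})\}=D_t$, so $\{D_t\}$ is nonincreasing. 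Since $\inf F>-\infty$, the sequence $\{D_t\}$ converges to some $\bar D$. The same estimate $F(\bm x^t)\le D_{t-1}\le D_0=F(\bm x^0)$ places every iterate in the compact set $\Omega(\bm x^0)$, on which $F$ is continuous and hence uniformly continuous; this uniform continuity is what will let me pass from smallness of successive differences to smallness of the corresponding function-value gaps.

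Next I would run the standard induction along the ``max-attaining'' indices. Pick $\ell(t)\in[[t-M]_+,t]$ with $F(\bm x^{\ell(t)})=D_t$, and note $\ell(t)\to\infty$ because $\ell(t)\ge t-M$. I claim that for every fixed $j\ge0$,
\[
\lim_{t\to\infty}\|\bm x^{\ell(t)-j}-\bm x^{\ell(t)-j-1}\|=0 \quad\text{and}\quad \lim_{t\to\infty}F(\bm x^{\ell(t)-j})=\bar D.
\]
For the base case $j=0$, the recursion at step $\ell(t)-1$ gives $\frac{c}{2}\|\bm x^{\ell(t)}-\bm x^{\ell(t)-1}\|^2\le D_{\ell(t)-1}-F(\bm x^{\ell(t)})=D_{\ell(t)-1}-D_t\to\bar D-\bar D=0$, while $F(\bm x^{\ell(t)})=D_t\to\bar D$. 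For the inductive step, uniform continuity turns $\|\bm x^{\ell(t)-j}-\bm x^{\ell(t)-j-1}\|\to0$ into $F(\bm x^{\ell(t)-j-1})\to\bar D$ (the function-value part at level $j+1$); then the recursion at step $\ell(t)-j-2$, namely $\frac{c}{2}\|\bm x^{\ell(t)-j-1}-\bm x^{\ell(t)-j-2}\|^2\le D_{\ell(t)-j-2}-F(\bm x^{\ell(t)-j-1})$, drives the next successive difference to zero since both terms on the right converge to $\bar D$.

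Finally I would transfer this to the whole sequence. For arbitrary $t$, set $s=t+M+1$; then $\ell(s)\in[t+1,t+M+1]$, so $t+1=\ell(s)-j$ for some $j\in\{0,1,\dots,M\}$ and $\|\bm x^{t+1}-\bm x^t\|=\|\bm x^{\ell(s)-j}-\bm x^{\ell(s)-j-1}\|\le\max_{0\le j\le M}\|\bm x^{\ell(s)-j}-\bm x^{\ell(s)-j-1}\|$. Because the induction yields the limit $0$ for each of the finitely many indices $j=0,\dots,M$, this maximum tends to $0$ as $s\to\infty$, i.e.\ as $t\to\infty$, giving $\|\bm x^{t+1}-\bm x^t\|\to0$. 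I expect the only delicate point to be the nonmonotonicity: because it is the window maximum $D_t$, rather than $F(\bm x^t)$ itself, that decreases, one cannot telescope directly, and the induction above together with uniform continuity on the compact level set $\Omega(\bm x^0)$ is precisely what repairs this.
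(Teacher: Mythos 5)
Your argument is correct and is exactly the one the paper invokes: the paper does not write out a proof but refers to \cite[Lemma~4]{WrightNF09}, which is the same Grippo--Lampariello--Lucidi-style induction on the window maximum $D_t$ and the offsets from the max-attaining index $\ell(t)$, using uniform continuity of $F$ on the compact level set $\Omega(\bm x^0)$. Nothing is missing; your reconstruction matches the intended proof.
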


\end{document}